\theoremstyle{plain}
\newtheorem{theorem}{Theorem}[section]
\newtheorem{conjecture}[theorem]{Conjecture}
\newtheorem{problem}[theorem]{Problem}
\newtheorem{corollary}[theorem]{Corollary}
\newtheorem{lemma}[theorem]{Lemma}
\newtheorem{definition}[theorem]{definition}
\newtheorem{claim}{Claim}
\newtheorem{subclaim}{Subclaim}[claim]
\newcommand{\C}{\mathbb{C}}
\newcommand{\fig}[5]{
\begin{figure}[ht]
\begin{center}
\resizebox{#1}{#2}{\includegraphics{#3}}
\end{center}
\caption{#4}
\label{#5}
\end{figure}
}
\title{%
A generalization of heterochromatic graphs%
}
\author{%
Kazuhiro Suzuki%
\footnote{This work was supported by MEXT. KAKENHI 21740085.}
\footnote{Department of Electronics and Informatics Frontier,
Kanagawa University, Yokohama, Kanagawa, 221-8686 Japan.
kazuhiro@tutetuti.jp.}
}
\date{\empty}
\begin{document}
\maketitle

\begin{abstract}
In 2006, Suzuki, and Akbari \& Alipour
independently presented a necessary and sufficient condition
for edge-colored graphs to have a heterochromatic spanning tree,
where a heterochromatic spanning tree is a spanning tree
whose edges have distinct colors.
In this paper,
we propose {\it $f$-chromatic} graphs
as a generalization of heterochromatic graphs.
An edge-colored graph is {\it $f$-chromatic}
if each color $c$ appears on at most $f(c)$ edges.
We also present a necessary and sufficient condition
for edge-colored graphs to have an $f$-chromatic spanning forest
with exactly $m$ components.
Moreover, using this criterion,
we show that
a $g$-chromatic graph $G$ of order $n$ with $|E(G)|>\binom{n-m}{2}$
has an $f$-chromatic spanning forest
with exactly $m$ ($1 \le m \le n-1$) components
if $g(c) \le \frac{|E(G)|}{n-m}f(c)$ for any color $c$.
\\[6pt]
{\bf Keyword(s):}
$f$-chromatic,
heterochromatic,
rainbow,
multicolored,
totally multicolored,
polychromatic,
colorful,
edge-coloring,
k-bounded coloring,
spanning tree,
spanning forest.
\\[6pt]
{\bf MSC2010:}
05C05\footnote{05C05 Trees.},
05C15\footnote{05C15 Coloring of graphs and hypergraphs.}.
\end{abstract}

\section{Introduction}

We consider finite undirected graphs without loops or multiple edges.
Let $G$ be a graph with vertex set $V(G)$ and edge set $E(G)$.
An \textit{edge-coloring} of a graph $G$
is a mapping $color:E(G) \rightarrow \C$,
where $\C$ is a set of colors.
An \textit{edge-colored graph} $(G, \C, color)$
is a graph $G$ with an edge-coloring $color$ on a color set $\C$.
We often abbreviate an edge-colored graph $(G, \C, color)$ as $G$.

An edge-colored graph $G$ is said to be \textit{heterochromatic}
if no two edges of $G$ have the same color,
that is,
$color(e) \ne color(f)$ for any two distinct edges $e$ and $f$ of $G$.
A heterochromatic graph is also said to be 
\textit{rainbow}, \textit{multicolored}, \textit{totally multicolored},
\textit{polychromatic}, or \textit{colorful}.
Heterochromatic subgraphs of edge-colored graphs
have been studied in many papers.
(See the survey by Kano and Li \cite{KaLi08}.)

We begin with some results
for the existence of heterochromatic spanning trees and forests.
Brualdi and Hollingsworth \cite{BrHo96} showed the following theorem and conjecture
for edge-disjoint heterochromatic spanning trees
in complete graphs.

\begin{theorem}[Brualdi and Hollingsworth, (1996) \cite{BrHo96}]
If the complete graph $K_{2n}$ $(n \ge 3)$
is edge-colored in such a way that
each color induces a perfect matching,
then it has two edge-disjoint heterochromatic spanning trees.
\label{thm:BrHo96}
\end{theorem}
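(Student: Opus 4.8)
The plan is to translate the problem into matroid language and to build the two trees one after another. Since $K_{2n}$ carries exactly $2n-1$ colors, each a perfect matching, and a spanning tree has $2n-1$ edges, any heterochromatic spanning tree must use \emph{exactly one} edge of every color. Thus a heterochromatic spanning tree is precisely a common base of the graphic matroid $M_g$ of $K_{2n}$ (whose bases are the spanning trees) and the partition matroid $M_p$ whose blocks are the color classes, each of capacity one (whose independent sets are the heterochromatic edge sets). Two edge-disjoint heterochromatic spanning trees are then two disjoint common bases of $M_g$ and $M_p$.

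First I would establish that a single heterochromatic spanning tree always exists. By the matroid intersection theorem a common base exists iff for every set $S$ of colors the subgraph $G_S$ formed by the edges whose colors lie in $S$ satisfies $c(G_S)+|S|\le 2n$, where $c(\cdot)$ counts components; here one uses that for the partition matroid the optimal set in the min-max formula may be taken to be a union of color classes. Now $G_S$ is the edge-disjoint union of $|S|$ perfect matchings, hence $|S|$-regular, so each component has at least $|S|+1$ vertices and $c(G_S)\le \tfrac{2n}{|S|+1}$. A direct calculation gives $\tfrac{2n}{|S|+1}+|S|\le 2n$ for all $1\le|S|\le 2n-1$, so the criterion holds and a heterochromatic spanning tree $T_1$ exists.

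Next I would fix such a $T_1$, delete its edges, and look for a second heterochromatic spanning tree inside $G'=K_{2n}-E(T_1)$. Because $T_1$ is rainbow, deleting it removes exactly one edge from each color class, so in $G'$ every color is a matching of size $n-1$, and any heterochromatic spanning tree of $G'$ is automatically edge-disjoint from $T_1$. I would re-apply the intersection criterion to $G'$: for each color set $S$ with $|S|=s$ I need $c(G'_S)\le 2n-s$. The key point is that at most one vertex can be isolated in $G'_S$ once $s\ge 2$, since a vertex is isolated in $G'_S$ exactly when it is an endpoint of the $T_1$-edge of \emph{every} color in $S$, and two distinct vertices cannot both be endpoints of two distinct ($s\ge2$) monochromatic edges without creating parallel edges. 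Together with the minimum-degree estimate $\deg_{G'_S}(v)\ge s-\Delta(T_1)$, this controls the components; the small cases $s=1,2$ and the regime of large $s$ are settled by these component and degree bounds, and it is exactly here that the hypothesis $n\ge 3$ is consumed.

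The main obstacle is the choice of $T_1$: a careless choice can destroy the criterion for $G'$. For instance a spanning star $T_1$ isolates its center in $G'$, so $G'$ has no spanning tree at all (this is the failure of the case $s=2n-1$, which is precisely the connectivity of $G'$). Hence I must select $T_1$ of suitably bounded maximum degree, so that $G'$ stays sufficiently connected and the bound $\deg_{G'_S}(v)\ge s-\Delta(T_1)$ forces few components for large $s$. Proving that a heterochromatic spanning tree with small maximum degree exists is the delicate step: I would obtain it by an exchange/augmentation argument inside the matroid intersection, removing an edge $vu$ of color $c$ at a high-degree vertex $v$ and rerouting across the resulting cut along another color-$c$ edge (or a longer augmenting path) that avoids $v$, thereby lowering $\deg_{T_1}(v)$ while preserving both the forest and the rainbow properties, and iterating until $\Delta(T_1)$ is low enough to guarantee the criterion for $G'$.
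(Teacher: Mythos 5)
This theorem is quoted in the paper from Brualdi and Hollingsworth's original article; the paper itself gives no proof, so your attempt can only be judged on its own merits. The first half of your argument is sound and complete: since $K_{2n}$ has $n(2n-1)$ edges and each color class has $n$ edges, there are exactly $2n-1$ colors, a rainbow spanning tree must use each color exactly once, and the matroid-intersection (equivalently, Theorem \ref{thm-Su06-1}) criterion $c(G_S)+|S|\le 2n$ follows from the fact that $G_S$ is $|S|$-regular. Your observation that at most one vertex of $G'_S$ can be isolated when $|S|\ge 2$ (because $T_1$ is rainbow, two isolated vertices would force the edge between them to carry two colors) is also correct and genuinely useful: it already settles the criterion for $G'$ in the range $2\le |S|\le n$.

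The genuine gap is that the entire second half is a plan rather than a proof, and the two steps it hinges on are exactly the hard ones. First, you never state, let alone verify, what bound on $\Delta(T_1)$ suffices: the intermediate and large-$|S|$ regimes require an explicit computation showing $c(G'_S)\le 2n-|S|$ from the estimate $\deg_{G'_S}(v)\ge |S|-\Delta(T_1)$, and "the regime of large $s$ is settled by these bounds" is not an argument. Second, and more seriously, the existence of a rainbow spanning tree of bounded maximum degree is asserted via an exchange argument that does not obviously terminate: if you delete a color-$c$ edge $vu$ at a high-degree vertex $v$, splitting $T_1$ into $T_v\ni v$ and $T_u\ni u$, the only color usable across the cut is $c$ itself, and it can happen that the color-$c$ perfect matching pairs $V(T_u)\setminus\{u\}$ entirely within itself, so no single-edge swap reduces $\deg_{T_1}(v)$; your fallback of "a longer augmenting path" is left completely unspecified, and preserving both the forest property and the rainbow property along such a path is precisely the difficulty. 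Until a concrete degree bound is fixed, a rainbow tree achieving it is shown to exist, and the criterion for $G'$ is verified for every $|S|$ (in particular identifying where $n\ge 3$ is actually used), the theorem is not proved. For comparison, the original Brualdi--Hollingsworth argument avoids this entirely by directly constructing two explicit edge-disjoint rainbow spanning trees from the $1$-factorization rather than selecting the first tree and hoping the residual graph still satisfies an intersection criterion.
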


\begin{conjecture}[\cite{BrHo96}]
Under the same condition as in Theorem \ref{thm:BrHo96},
the edges of $K_{2n}$ can be partitioned into
$n$ edge-disjoint heterochromatic spanning trees.
\label{conj:BrHo96}
\end{conjecture}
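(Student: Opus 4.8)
The plan is to prove the conjecture by peeling off heterochromatic spanning trees one at a time, using the $f$-chromatic spanning forest criterion of this paper as the engine with the specialization $f\equiv 1$ (so that ``$f$-chromatic'' means exactly ``heterochromatic'') and $m=1$ (spanning trees). The first observation to record is a counting rigidity: since $K_{2n}$ is properly edge-colored by $2n-1$ perfect matchings it uses exactly $2n-1$ colors, each on $n$ edges, and a spanning tree has $2n-1$ edges, so \emph{every} heterochromatic spanning tree uses each color exactly once. Hence after deleting $k$ pairwise edge-disjoint heterochromatic spanning trees the remaining graph $G_k$ has $(n-k)(2n-1)$ edges with every color appearing exactly $n-k$ times, and it suffices to show that $G_k$ always contains a further heterochromatic spanning tree whenever $k<n$; the color-frequency bookkeeping then forces the $n$ extractions to partition all of $E(K_{2n})$.

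First I would try the cheap route of feeding $G_k$ directly into the sufficient condition quoted in the abstract, with $m=1$, $f\equiv 1$, and $g(c)=n-k$. The ratio hypothesis $g(c)\le \frac{|E(G_k)|}{2n-1}f(c)$ holds with equality, but the edge-count hypothesis $|E(G_k)|>\binom{2n-1}{2}$ becomes $(n-k)(2n-1)>(2n-1)(n-1)$, i.e. $k<1$. Thus the abstract's corollary produces only the very first tree (recovering the flavor of Theorem \ref{thm:BrHo96}), and the edge surplus it relies on is exhausted immediately. This is the signal that a counting-only argument cannot close the conjecture, so the substantive work must instead invoke the full necessary-and-sufficient partition condition underlying that corollary: $G_k$ has a heterochromatic spanning tree iff for every partition $\mathcal P=\{V_1,\dots,V_r\}$ of $V(G_k)$ the number of colors appearing on edges joining distinct parts is at least $r-1$.

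The heart of the proof would then be to maintain this partition condition as an invariant throughout the peeling. The idea is to choose at each step not an arbitrary heterochromatic spanning tree but a \emph{balanced} one, whose deletion decreases every relevant ``colors across a cut'' count by at most its slack, so that the criterion keeps holding for $G_{k+1}$. I would prove by induction on $k$ that such a balanced tree exists, using an exchange/augmenting-path argument between the color classes (perfect matchings) and the current forest to trade edges whenever a cut is in danger of losing its last surplus color; Theorem \ref{thm:BrHo96} and the criterion itself supply the flexibility needed to start and continue this process.

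The hard part will be the endgame, not the start. When $k=n-1$ the leftover $G_{n-1}$ has exactly $2n-1$ edges, one of each color, so it is a heterochromatic spanning tree precisely when it is \emph{connected}, and nothing local forces this: greedy extractions can strand the final color classes into a disconnected graph. Controlling this global connectivity constraint --- equivalently, committing to all $n$ trees simultaneously rather than greedily --- is exactly where the inductive scheme stalls and is why the conjecture is still open in general. A realistic route to the full statement would augment the invariant-maintenance above with an absorption reservoir: reserve a small flexible family of edges (whose existence the $f$-chromatic criterion can guarantee) that can be reconfigured at the end to repair the connectivity of the last few trees, which is the mechanism by which the problem has since been resolved for large $n$.
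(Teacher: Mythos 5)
This statement is a \emph{conjecture}: the paper does not prove it, it only records it as an open problem of Brualdi and Hollingsworth, so there is no ``paper's own proof'' to compare against. Your proposal does not close the gap either, and to your credit you say so yourself. The concrete failures are these. First, your opening reductions are fine: since $K_{2n}$ is decomposed into $2n-1$ perfect matchings, a heterochromatic spanning tree (having $2n-1$ edges) must use every color exactly once, so after removing $k$ disjoint such trees each color class has exactly $n-k$ edges left; and you correctly observe that Theorem \ref{thm-100503-1} (with $f\equiv 1$, $m=1$) has its edge-count hypothesis $|E(G_k)|>\binom{2n-1}{2}$ satisfied only for $k=0$, so the paper's sufficient condition yields just one tree. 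But the engine of your argument --- the existence at every step of a ``balanced'' heterochromatic spanning tree whose removal preserves the cut condition of Theorem \ref{thm-Su06-1}/\ref{thm-AkAl06-1} for the remainder --- is never defined precisely, let alone proved. The necessary-and-sufficient criterion certifies that \emph{some} heterochromatic spanning tree exists in $G_k$; it says nothing about being able to choose one compatible with all future extractions, and a greedy choice can genuinely destroy the condition for $G_{k+1}$. Your proposed exchange/augmenting-path repair is exactly the missing content, not a routine verification.

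Second, the endgame you describe (the last tree is forced to be whatever $2n-1$ edges remain, and nothing guarantees it is connected) is not a presentational difficulty but the essential obstruction, and the ``absorption reservoir'' you gesture at is the technical core of the recent asymptotic resolutions for large $n$ --- it cannot be conjured from the $f$-chromatic criterion of this paper. So the honest assessment is: your write-up is a reasonable survey of why the conjecture is hard and why the tools of this paper do not suffice, but it is not a proof, and no proof should be expected here since the statement is presented in the paper as an open conjecture.
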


Suzuki \cite{Su06} presented a necessary and sufficient condition
for general connected graphs to have a heterochromatic spanning tree.
Here, we denote by $\omega(G)$ the number of components of a graph $G$.
Given an edge-colored graph $G$ and a color set $R$,
we define $E_{R}(G) = \{ e \in E(G) ~|~ color(e) \in R \}$.
Similarly, for a color $c$,
we define $E_c(G) = E_{\{c\}}(G)$.
We denote the graph $(V(G), E(G) \setminus E_R(G))$ by $G-E_R(G)$.

\begin{theorem}[Suzuki, (2006) \cite{Su06}]
An edge-colored  connected graph $G$
has a heterochromatic spanning tree
if and only if
\begin{equation*}
\omega(G-E_R(G)) \le |R|+1 \text{~~~~~ for any } R \subseteq \C.
\end{equation*}
\label{thm-Su06-1}
\end{theorem}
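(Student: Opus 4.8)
The plan is to prove the two directions separately, handling necessity by a direct spanning-tree argument and sufficiency through matroid intersection. Throughout I write $n = |V(G)|$.

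For necessity, suppose $G$ has a heterochromatic spanning tree $T$ and fix $R \subseteq \C$. Since the $n-1$ edges of $T$ carry pairwise distinct colors, at most $|R|$ of them lie in $E_R(G)$, so deleting these from $T$ leaves a spanning forest $T - E_R(T)$ with at most $|R|+1$ components. This forest is a spanning subgraph of $G - E_R(G)$, hence $\omega(G-E_R(G)) \le \omega(T - E_R(T)) \le |R|+1$, as required.

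For sufficiency --- the substantive direction --- I would reformulate the existence of a heterochromatic spanning tree as a matroid intersection problem. A heterochromatic spanning tree is precisely a set of $n-1$ edges that is simultaneously independent in the graphic matroid $M(G)$ (acyclic) and in the partition matroid $P$ on $E(G)$ whose blocks are the color classes $E_c(G)$, each of capacity one (distinct colors). Writing $r_1(A) = n - \omega((V(G),A))$ for the rank in $M(G)$ and $r_2(B) = |\{c \in \C : B \cap E_c(G) \ne \emptyset\}|$ for the rank in $P$, Edmonds' matroid intersection theorem gives that the maximum size of a common independent set equals $\min_{A \subseteq E(G)}\bigl(r_1(A) + r_2(E(G) \setminus A)\bigr)$. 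Since $G$ is connected, $r_1(E(G)) = n-1$, so this maximum is at most $n-1$; and a common independent set of size $n-1$ is automatically a spanning tree that is heterochromatic. Thus it suffices to show the minimum is at least $n-1$.

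The key step, and the main technical point, is translating the hypothesis into this min-max bound. Given an arbitrary $A \subseteq E(G)$, let $R$ be the set of colors appearing on $E(G) \setminus A$, so that $r_2(E(G)\setminus A) = |R|$ and $E(G) \setminus A \subseteq E_R(G)$. The latter inclusion gives $A \supseteq E(G) \setminus E_R(G)$, so the spanning subgraph $(V(G), A)$ contains $G - E_R(G)$ and therefore $\omega((V(G),A)) \le \omega(G - E_R(G)) \le |R|+1$ by hypothesis. Hence $r_1(A) = n - \omega((V(G),A)) \ge n - |R| - 1$, and adding $r_2(E(G)\setminus A) = |R|$ yields $r_1(A) + r_2(E(G)\setminus A) \ge n-1$. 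As $A$ was arbitrary, the minimum is at least $n-1$, completing the proof. I expect the main obstacle to lie not in this computation but in the choice of framework: a fully self-contained argument avoiding matroid intersection would instead take a maximum heterochromatic forest $F$, assume $|E(F)| < n-1$, and extract from the components of $F$ and the colors they use a set $R$ violating the inequality; making the bookkeeping of forbidden colors and components precise is the delicate part, and it is exactly what the matroid-intersection route packages cleanly.
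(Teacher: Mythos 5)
Your proof is correct, but the sufficiency direction follows a genuinely different route from the paper's. The paper does not prove this theorem directly: it obtains it as the special case $m=1$, $f(c)=1$ of Theorem \ref{thm-091015-1}, whose sufficiency is proved by an elementary, self-contained extremal argument --- take a saturated triple $<F,C_0,C_1>$ with $C_0$ maximal and derive a contradiction by exhibiting a strictly larger saturated triple $<F',C'_0,C'_1>$; no matroid machinery appears. Your argument instead encodes a heterochromatic spanning tree as a size-$(n-1)$ common independent set of the graphic matroid and the unit-capacity partition matroid on color classes, and verifies the Edmonds min-max bound by translating each $A\subseteq E(G)$ into the color set $R=color(E(G)\setminus A)$; that translation and the resulting computation $r_1(A)+r_2(E(G)\setminus A)\ge n-1$ are all correct, and your necessity argument coincides with the paper's. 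Your route is essentially the one the paper attributes to Akbari and Alipour for the equivalent Theorem \ref{thm-AkAl06-1} (proved there via Rado's theorem): it is shorter and packages the ``forbidden colors and components'' bookkeeping inside a black box, at the cost of importing matroid intersection. The paper's route buys self-containedness and, more importantly, a template that scales directly to the $f$-chromatic spanning forest generalization with capacities $f(c)$ and $m$ components, where the saturated conditions do exactly the bookkeeping you flag as delicate.
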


Jin and Li \cite{JiLi06} generalized this theorem
to the following theorem,
from which we can obtain Theorem \ref{thm-Su06-1} by taking $k=n-1$.

\begin{theorem}[Jin and Li, (2006) \cite{JiLi06}]
An edge-colored  connected graph $G$ of order $n$
has a spanning tree with at least $k$ $(1 \le k \le n-1)$ colors
if and only if
\begin{equation*}
\omega(G-E_R(G)) \le n-k+|R| \text{~~~~~ for any } R \subseteq \C.
\end{equation*}
\label{thm-JiLi06-1}
\end{theorem}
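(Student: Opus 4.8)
The plan is to reduce the statement to a single matroid-intersection computation. The driving observation is that, because $G$ is connected, $G$ has a spanning tree with at least $k$ colours if and only if $G$ contains a \emph{heterochromatic forest} with exactly $k$ edges: from a spanning tree with at least $k$ colours one keeps one edge of each of $k$ distinct colours, and conversely any heterochromatic forest with $k$ edges extends (in a connected graph) to a spanning tree that then shows at least $k$ colours. So it suffices to decide when a heterochromatic forest of size $k$ exists.

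To this end I would put two matroids on the ground set $E(G)$: the graphic matroid $M_1$ of $G$, with rank $r_1(A) = n - \omega\bigl((V(G),A)\bigr)$, and the partition matroid $M_2$ determined by the colouring, whose independent sets are exactly the heterochromatic edge sets and whose rank $r_2(B)$ is the number of distinct colours appearing on $B$. A set is independent in both matroids precisely when it is a heterochromatic forest, so by Edmonds' matroid intersection theorem the maximum size of a heterochromatic forest equals $\min_{A \subseteq E(G)} \bigl( r_1(A) + r_2(E(G) \setminus A) \bigr)$.

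For the necessity direction I would argue directly. If $T$ is a spanning tree with at least $k$ colours and $R \subseteq \C$, then the colours outside $R$ contribute at least $k-|R|$ distinct colours to $T$, so $T$ has at least $\max\{k-|R|,0\}$ edges avoiding $R$; hence $T$ uses at most $n-1-(k-|R|)$ edges of $E_R(G)$, and deleting these from $T$ leaves a spanning forest of $G - E_R(G)$ with at most $1 + (n-1-k+|R|) = n-k+|R|$ components. Since $G-E_R(G)$ has no more components than any of its spanning forests, $\omega(G-E_R(G)) \le n-k+|R|$.

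For sufficiency I would show the matroid-intersection minimum is at least $k$. Given an arbitrary $A \subseteq E(G)$, let $R$ be the set of colours occurring on $E(G) \setminus A$; then $E(G) \setminus A \subseteq E_R(G)$, so $A \supseteq E(G) \setminus E_R(G) = E(G - E_R(G))$ and therefore $r_1(A) \ge n - \omega(G - E_R(G))$, while $r_2(E(G)\setminus A) = |R|$. Combining with the hypothesis $\omega(G-E_R(G)) \le n-k+|R|$ gives $r_1(A)+r_2(E(G)\setminus A) \ge \bigl(n-\omega(G-E_R(G))\bigr) + |R| \ge k$. As $A$ was arbitrary, the minimum is at least $k$, so a heterochromatic forest of size $k$ exists. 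The step I expect to be the crux is exactly this last reduction: the hypothesis only constrains the ``colour-closed'' partitions in which $E(G)\setminus A$ is some $E_R(G)$, and the work is to verify that checking these suffices to bound the full matroid-intersection minimum; the monotonicity $r_1(A) \ge r_1\bigl(E(G)\setminus E_R(G)\bigr)$ is what makes the reduction go through. If one prefers to avoid matroid intersection and to stay closer to Theorem~\ref{thm-Su06-1}, the same inequalities can instead feed an augmenting-path argument, which is where the real effort would then lie.
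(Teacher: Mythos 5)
Your proof is correct, and it takes a genuinely different route from the paper. The paper never proves this theorem directly: it cites it, rephrases it as the existence of a heterochromatic spanning forest with exactly $n-k$ components (Theorem \ref{thm-JiLi06-2}), and then obtains it as the special case $m=n-k$, $f(c)\equiv 1$ of the general Theorem \ref{thm-091015-1}, whose proof is an elementary, self-contained exchange argument built on the notion of saturated triples $\langle F, C_0, C_1\rangle$ (essentially an augmenting argument run by hand). You instead invoke Edmonds' matroid intersection theorem for the graphic matroid and the colour partition matroid, after the same initial reduction to heterochromatic forests of size $k$ that the paper itself makes between Theorems \ref{thm-JiLi06-1} and \ref{thm-JiLi06-2}. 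Your necessity argument matches the paper's in substance, and the key step in your sufficiency direction --- passing from an arbitrary $A\subseteq E(G)$ to the colour-closed set $R=color(E(G)\setminus A)$ and using monotonicity of the graphic rank --- is carried out correctly, so the min--max bound $\min_A\bigl(r_1(A)+r_2(E(G)\setminus A)\bigr)\ge k$ does follow from the hypothesis. What each approach buys: yours is short and conceptually transparent, but outsources the hard work to Edmonds' theorem (and is in the spirit of the Akbari--Alipour proof via Rado's theorem that the paper explicitly contrasts with the graph-theoretic one); the paper's saturated-triple machinery is longer but self-contained and is built to deliver the full $f$-chromatic generalization with arbitrary capacities $f(c)$ and $m$ components, of which this theorem is only one specialization.
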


If an edge-colored connected graph $G$ of order $n$
has a spanning tree with at least $k$ colors,
then
$G$ has a heterochromatic spanning forest with $k$ edges,
that is,
$G$ has a heterochromatic spanning forest with exactly $n-k$ components.
On the other hand,
If an edge-colored connected graph $G$ of order $n$
has a heterochromatic spanning forest with exactly $n-k$ components,
then
the forest can be turned into a spanning tree with at least $k$ colors
by adding some $n-k-1$ edges.
Hence, we can rephrase Theorem \ref{thm-JiLi06-1} as the following.

\begin{theorem}[\cite{JiLi06}]
An edge-colored  connected graph $G$ of order $n$
has a heterochromatic spanning forest
with exactly $n-k$ components $(1 \le k \le n-1)$
if and only if
\begin{equation*}
\omega(G-E_R(G)) \le n-k+|R| \text{~~~~~ for any } R \subseteq \C.
\end{equation*}
\label{thm-JiLi06-2}
\end{theorem}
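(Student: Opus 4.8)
The plan is to obtain this statement directly from Theorem \ref{thm-JiLi06-1}. Since the displayed inequality is exactly the condition appearing there, it suffices to show that, for a connected edge-colored graph $G$ of order $n$ and any $k$ with $1 \le k \le n-1$, the existence of a spanning tree with at least $k$ colors is equivalent to the existence of a heterochromatic spanning forest with exactly $n-k$ components. Once this equivalence is established, the two theorems share identical hypotheses and conclusions, so the criterion transfers verbatim. The guiding numerical identity throughout is that a spanning forest of an $n$-vertex graph with exactly $n-k$ components is acyclic with precisely $k$ edges, and that a heterochromatic forest with $k$ edges uses exactly $k$ colors.

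For the first implication, I would start from a spanning tree $T$ of $G$ carrying at least $k$ distinct colors. I would select one edge of each of $k$ of these colors; the chosen edges form a subset of the edge set of a tree and are therefore acyclic, so together with all $n$ vertices of $G$ they constitute a spanning forest $F$ with $k$ edges, hence with exactly $n-k$ components. Since the selected edges have pairwise distinct colors by construction, $F$ is heterochromatic, yielding the desired forest.

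For the converse, I would begin with a heterochromatic spanning forest $F$ of $G$ having exactly $n-k$ components, so that $F$ is acyclic with exactly $k$ edges, all of distinct colors. Using the connectivity of $G$, I would repeatedly add an edge joining two distinct components of the current forest until a single tree remains; this adjoins exactly $n-k-1$ edges and produces a spanning tree $T \supseteq F$. Because $T$ retains the $k$ distinctly-colored edges of $F$, it carries at least $k$ colors, as required.

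I do not expect a genuine obstacle here: the argument is essentially a counting correspondence between forests and trees, and the only points demanding care are the edge-count identities above and the observation that enlarging $F$ to $T$ can only increase, never decrease, the number of colors present—so the ``at least $k$'' conclusion survives the extension. The connectivity of $G$ is used solely to guarantee that the forest can in fact be completed to a spanning tree.
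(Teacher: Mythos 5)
Your proposal is correct and follows exactly the route the paper takes: it establishes the equivalence between a spanning tree with at least $k$ colors and a heterochromatic spanning forest with exactly $n-k$ components (picking one edge per color in one direction, completing the forest to a tree with $n-k-1$ added edges in the other), and then invokes Theorem \ref{thm-JiLi06-1}. You merely spell out the details more fully than the paper's two-sentence justification.
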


Akbari and Alipour \cite{AkAl06}
gave another necessary and sufficient condition
for graphs to have a heterochromatic spanning tree.

\begin{theorem}[Akbari and Alipour, (2006) \cite{AkAl06}]
An edge-colored connected graph $G$ of order $n$
has a heterochromatic spanning tree
if and only if
for every partition of $V(G)$ into $t$ $(1 \le t \le n)$ parts,
there exist at least $t-1$ edges
with distinct colors that join different partition sets.
\label{thm-AkAl06-1}
\end{theorem}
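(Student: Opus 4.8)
The plan is to deduce Theorem~\ref{thm-AkAl06-1} from Theorem~\ref{thm-Su06-1} by proving that the Akbari--Alipour partition condition is \emph{equivalent} to the Suzuki condition $\omega(G-E_R(G)) \le |R|+1$ for all $R \subseteq \C$. Since Theorem~\ref{thm-Su06-1} already characterizes the existence of a heterochromatic spanning tree through the Suzuki condition, establishing this equivalence finishes the proof at once. I would argue via the contrapositives on both sides, because the negated statements are cleaner to manipulate. Throughout I record the reformulation that ``there exist at least $t-1$ edges with distinct colors joining different parts'' means precisely that the edges crossing between parts carry at least $t-1$ distinct colors; the guiding idea is the correspondence between a color set $R$ and the partition of $V(G)$ induced by the components of $G-E_R(G)$.

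First I would treat the direction (Suzuki condition fails) $\Rightarrow$ (partition condition fails). Suppose there is a color set $R$ with $\omega(G-E_R(G)) = s \ge |R|+2$, and take the $s$ components of $G-E_R(G)$ as the parts of a partition $\mathcal{P}$, so $t=s$. The key observation is that any edge of $G$ joining two different components of $G-E_R(G)$ must have been deleted, hence must carry a color in $R$; consequently every crossing edge of $\mathcal{P}$ has a color in $R$, so the crossing edges carry at most $|R| \le s-2 = t-2$ distinct colors. Thus $\mathcal{P}$ violates the partition condition.

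Next, for (partition condition fails) $\Rightarrow$ (Suzuki condition fails), suppose some partition $\mathcal{P}=\{V_1,\dots,V_t\}$ has crossing edges carrying at most $t-2$ distinct colors, and let $R$ be exactly the set of colors appearing on crossing edges, so $|R| \le t-2$. Deleting $E_R(G)$ removes every crossing edge, so in $G-E_R(G)$ no edge joins two distinct parts; each nonempty $V_i$ is therefore a union of components, and since the $t$ parts are pairwise disconnected we obtain $\omega(G-E_R(G)) \ge t \ge |R|+2$, violating the Suzuki condition. Combining the two directions gives the equivalence, and invoking Theorem~\ref{thm-Su06-1} completes the argument.

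The proof is short once the correspondence ``parts $\leftrightarrow$ components after deleting a color class'' is in place, so there is no single heavy computation; the step needing the most care is the \emph{asymmetry} of this correspondence. In the first direction the induced partition has crossing colors only \emph{contained in} $R$ (not necessarily equal to it), while in the second direction $R$ is exactly the crossing-color set but the component count is merely bounded \emph{below} by $t$. I would make sure both inequalities point the right way, namely $\omega(G-E_R(G)) \ge |R|+2$ and (crossing colors) $\le t-2$, since that is where a careless inclusion could reverse the logic. Finally I would verify the boundary ranges: the component partition satisfies $2 \le t \le n$, and the trivial cases $t=1$ (no crossing edges) and $R=\emptyset$ (which reduces to $G$ being connected, an assumption) are consistent with both conditions.
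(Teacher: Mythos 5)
Your argument is correct. The paper itself gives no proof of Theorem~\ref{thm-AkAl06-1}: it is quoted from Akbari and Alipour, and the paper only remarks that it is ``essentially the same'' as Theorem~\ref{thm-Su06-1} and that the original proof goes through Rado's theorem in matroid theory. What you have done is make that ``essentially the same'' remark precise by exhibiting an elementary equivalence between the partition condition and the condition $\omega(G-E_R(G)) \le |R|+1$, and then invoking Theorem~\ref{thm-Su06-1}. Both directions of your equivalence check out: given a bad $R$, the components of $G-E_R(G)$ form a partition whose crossing edges are all colored from $R$, giving at most $|R| \le t-2$ distinct crossing colors; given a bad partition, taking $R$ to be the set of crossing colors kills every crossing edge, so each (nonempty) part is a union of components and $\omega(G-E_R(G)) \ge t \ge |R|+2$. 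You also correctly flag the asymmetry (containment of crossing colors in $R$ in one direction, exact equality in the other) and the boundary cases $t=1$ and $R=\emptyset$. Compared with the matroid-theoretic original, your derivation buys economy within this paper's framework --- it is a two-paragraph reduction to an already-stated theorem --- at the cost of not being self-contained: it presupposes Theorem~\ref{thm-Su06-1}, whose graph-theoretic proof carries the real content. The only point worth tightening is the phrase ``each nonempty $V_i$'': under the standard convention that the parts of a partition are nonempty, you should state outright that all $t$ parts contribute at least one component each, since the bound $\omega(G-E_R(G)) \ge t$ silently uses this.
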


Theorem \ref{thm-Su06-1} and Theorem \ref{thm-AkAl06-1}
are essentially the same,
but the proofs are different.
Theorem \ref{thm-Su06-1} was proved graph theoretically,
and Theorem \ref{thm-AkAl06-1} was proved
by using Rado's Theorem in Matroid Theory.

Suzuki \cite{Su06} proved the following theorem
by applying Theorem \ref{thm-Su06-1}.

\begin{theorem}[Suzuki, (2006) \cite{Su06}]
An edge-colored  complete graph $K_n$ has a heterochromatic spanning tree
if $|E_c(G)| \le n/2$ for any color $c \in \C$.
\label{thm-Su06-2}
\end{theorem}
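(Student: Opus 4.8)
The plan is to verify the necessary and sufficient condition of Theorem \ref{thm-Su06-1}, namely that $\omega(K_n-E_R(K_n)) \le |R|+1$ holds for every color set $R \subseteq \C$. Since $K_n$ is connected, establishing this inequality immediately yields a heterochromatic spanning tree. I fix an arbitrary $R \subseteq \C$ and write $r=|R|$.

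First I would dispose of the easy range $r \ge n-1$: here $|R|+1 \ge n$, while any graph on $n$ vertices has at most $n$ components, so $\omega(K_n-E_R) \le n \le |R|+1$ holds trivially. For the main range $r \le n-2$ I would argue by contradiction, assuming $\omega(K_n-E_R) \ge r+2$, and compare two estimates on the number $|E_R(K_n)|$ of removed edges. On one hand, since each color class has at most $n/2$ edges, $|E_R| \le r\cdot n/2$. On the other hand, because $K_n$ is complete, every edge joining two distinct components of $K_n-E_R$ must lie in $E_R$; if the component sizes are $n_1,\dots,n_k$ with $k=\omega \ge r+2$, then $|E_R| \ge \binom{n}{2}-\sum_i \binom{n_i}{2}$.

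The quantity $\sum_i \binom{n_i}{2}$ is maximized, and hence this lower bound minimized, by the most unbalanced partition (one part of size $n-k+1$, the remaining parts singletons), giving $|E_R| \ge \binom{n}{2}-\binom{n-k+1}{2}$. Since this expression is increasing in $k$ and $k \ge r+2$, I obtain $|E_R| \ge \binom{n}{2}-\binom{n-r-1}{2}$. It then remains to check the numerical inequality $\binom{n}{2}-\binom{n-r-1}{2} > rn/2$. A short computation rewrites the left side as $\tfrac12(r+1)(2n-r-2)$, so the inequality reduces to $(r+2)(n-r-1)>0$, which is exactly the defining condition $r \le n-2$ of this range. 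This contradicts $|E_R| \le rn/2$, so in fact $\omega(K_n-E_R) \le r+1$, completing the verification.

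I expect the convexity argument pinning down the extremal (most unbalanced) partition to be the only genuinely delicate point; once that is in place, the remaining binomial-coefficient algebra and the boundary case $r \ge n-1$ are routine, and Theorem \ref{thm-Su06-1} does the rest.
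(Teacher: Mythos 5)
Your proof is correct, and it is essentially the paper's argument: the paper establishes Theorem \ref{thm-Su06-2} as the special case $m=1$, $f\equiv 1$, $g\equiv n/2$ of Theorem \ref{thm-100503-1}, whose proof is exactly your double count of the edges between components of $G-E_R(G)$ (upper bound from the color-class sizes, lower bound from the most-unbalanced-partition estimate, which is the paper's Lemma \ref{lem-110208-1}). The only cosmetic difference is that you run the contradiction against the criterion of Theorem \ref{thm-Su06-1} directly rather than against its generalization Theorem \ref{thm-091015-1}.
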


This theorem implies that
by properly bounding numbers of edges for each color,
the graph can contain enough colors
to have a heterochromatic spanning tree.
If the edges of a graph $G$
is colored so that no color appears on more than $k$ edges,
we refer to this as a {\it $k$-bounded edge-coloring}.
Erd\H{o}s, Nesetril and R\"{o}dl \cite{ErNeRo83} mentioned
the following problem.

\begin{problem}[Erd\H{o}s, Nesetril and R\"{o}dl, (1983) \cite{ErNeRo83}]
Find a bound $k=k(n)$
such that every $k$-bounded edge-colored complete graph $K_n$
contains a heterochromatic Hamiltonian cycle.
\label{pro-100412-1}
\end{problem}

Hahn and Thomassen \cite{HaTh86} proved the following theorem.

\begin{theorem}[Hahn and Thomassen, (1986) \cite{HaTh86}]
There exists a constant number $c$
such that
if $n \ge ck^3$
then every $k$-bounded edge-colored complete graph $K_n$
has a heterochromatic Hamiltonian cycle.
\label{thm-HaTh86-1}
\end{theorem}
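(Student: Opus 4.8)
The plan is to use the probabilistic alteration method: generate a random Hamiltonian cycle, delete a small number of edges to destroy every color repetition, and then reconnect the resulting rainbow fragments into a single rainbow Hamiltonian cycle. First I would choose a cyclic ordering of $V(K_n)$ uniformly at random, obtaining a Hamiltonian cycle $C$. For two distinct edges $e,f$ of the same color, the probability that both lie on $C$ is $O(1/n^2)$, since each fixed edge lies on $C$ with probability $\frac{2}{n-1}$ and the two events are nearly independent. Because the coloring is $k$-bounded, the number of monochromatic pairs of edges is $\sum_c \binom{|E_c(K_n)|}{2} \le \frac{k-1}{2}\binom{n}{2}$, so the expected number of monochromatic pairs appearing on $C$ is at most $(1+o(1))k = O(k)$. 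Hence some cyclic ordering has at most $2k$ conflicting pairs; deleting one edge from each such pair removes $O(k)$ edges and leaves a rainbow linear forest $F$ that spans $V(K_n)$ and consists of $t=O(k)$ paths.

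The decisive task is then to reconnect these $t=O(k)$ paths into one Hamiltonian cycle while keeping the whole edge set rainbow. For this I would reserve, before anything else, a random auxiliary set $W \subseteq V(K_n)$ and build the forest $F$ on $V(K_n)\setminus W$ only, so that the vertices of $W$ can serve as flexible connectors. To merge a fragment-endpoint $u$ with an endpoint $u'$ of another fragment I would route through an intermediate vertex $w \in W$, which requires the colors $color(uw)$ and $color(wu')$ to be distinct and to avoid the (at most $n+O(k)$) colors already used. The freedom to choose $w$ among the many vertices of $W$ is what rescues the construction: since each color occupies at most $k$ edges, only a controlled number of choices of $w$ are spoiled by an already-used color, so a valid connector survives provided $|W|$, and hence $n$, is large enough. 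Processing all $O(k)$ gaps in turn, each demanding a fresh connector that avoids a pool of $O(n)$ forbidden colors, is exactly where the cubic bound arises: guaranteeing enough simultaneously available, color-disjoint connectors across $O(k)$ merges forces $n=\Omega(k^3)$.

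The step I expect to be the main obstacle is this reconnection. Unlike the deletion step, the connecting edges have colors fixed by the coloring and cannot be chosen freely, so one must instead exploit the abundance of vertices together with the $k$-boundedness to certify that color-conflict-free connectors exist for all $O(k)$ merges at once. Making this simultaneous-avoidance argument quantitatively tight, for instance through a Hall-type matching condition or a union-bound estimate ranging over all $O(k)$ connections, is what pins down both the constant $c$ and the exponent $3$ in the bound $n \ge ck^3$.
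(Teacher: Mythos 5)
First, note that the paper you are working against does not prove this statement at all: Theorem \ref{thm-HaTh86-1} is quoted from Hahn and Thomassen's 1986 paper and used only as background, so there is no in-paper proof to compare yours to. Judged on its own, your proposal has a sound first half and a genuine gap in the second. The deletion step is fine: a uniformly random cyclic ordering puts a fixed edge on the cycle with probability $2/(n-1)$, the number of monochromatic pairs is at most $\frac{k-1}{2}\binom{n}{2}$ by $k$-boundedness, so the expected number of conflicting pairs on the cycle is $O(k)$ and some ordering yields a rainbow spanning linear forest with $O(k)$ path components. This much is standard and correct.

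The reconnection step, however, is not merely ``the main obstacle'' you flag --- as written it does not work, for two concrete reasons. First, you reserve a set $W$ and route each of the $t=O(k)$ merges through a single intermediate vertex of $W$; this uses only $O(k)$ vertices of $W$, and every unused vertex of $W$ is simply absent from the final cycle, so you do not produce a Hamiltonian cycle of $K_n$ unless $|W|=O(k)$, in which case ``the many vertices of $W$'' is not many and the avoidance count collapses. Second, the claim that ``only a controlled number of choices of $w$ are spoiled'' does not follow from $k$-boundedness alone: at a fixed fragment-endpoint $u$ there are $n-1$ incident edges, the set of already-used colors can have size $\Theta(n)$, and each such color can contribute up to $k$ edges at $u$, so in the worst case \emph{every} edge $uw$ with $w\in W$ carries a used color and no valid connector survives. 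Overcoming this requires a global (not per-vertex) argument about which colors get ``used,'' or an entirely different reconnection mechanism (rotations/absorption, or the Local Lemma as in Albert--Frieze--Reed), and that is precisely where the exponent $3$ must be earned. Until that step is carried out quantitatively, the proposal is an outline of a plausible strategy rather than a proof.
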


Albert, Frieze, and Reed \cite{AlFrRe95}
improved Theorem \ref{thm-HaTh86-1}.

\begin{theorem}[Albert, Frieze, and Reed, (1995) \cite{AlFrRe95}]
Let $c < 1/32$.
If $n$ is sufficiently large and $k \le \lceil cn \rceil$,
then every $k$-bounded edge-colored complete graph $K_n$
has a heterochromatic Hamiltonian cycle.
\label{thm-AlFrRe95-1}
\end{theorem}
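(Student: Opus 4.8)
The statement is a classical application of the probabilistic method, and the natural plan is to prove it via the Lov\'asz Local Lemma applied to a uniformly random Hamiltonian cycle of $K_n$. Write $H$ for a Hamiltonian cycle chosen uniformly at random among all $(n-1)!/2$ such cycles. For every pair of distinct edges $e,f$ of $K_n$ having the same color, introduce the \emph{bad event} $A_{e,f}=\{\,e\in H \text{ and } f\in H\,\}$. Since $H$ is heterochromatic precisely when no two of its edges share a color, the cycle $H$ is a heterochromatic Hamiltonian cycle exactly when none of the events $A_{e,f}$ occurs. Thus it suffices to show that, under the stated hypotheses, $\Pr\big[\bigcap_{e,f}\overline{A_{e,f}}\big]>0$.

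First I would compute the probability of a single bad event. By symmetry any fixed edge lies in $H$ with probability $\frac{2}{n-1}$, and conditioning on $e\in H$ and contracting $e$ shows that a second, vertex-disjoint edge $f$ then lies in $H$ with probability $\frac{2}{n-2}$; hence
\begin{equation*}
\Pr[A_{e,f}]\le \frac{2}{n-1}\cdot\frac{2}{n-2}=\frac{4}{(n-1)(n-2)}=:p,
\end{equation*}
the case of edges sharing a vertex being even smaller. So every bad event has probability $p=\Theta(1/n^2)$.

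Next I would set up the dependency structure. The point is that in a uniform Hamiltonian cycle the event $A_{e,f}$ is essentially unaffected by information about edges disjoint from $\{e,f\}$, whereas edges incident to the (at most four) endpoints of $e$ and $f$ genuinely interact with it, because each vertex carries exactly two cycle-edges. I would therefore take the dependency graph in which $A_{e,f}$ is joined to $A_{e',f'}$ whenever the two pairs share a vertex. Each of the four endpoints of $e,f$ is incident to about $n$ edges, each such edge lies in at most $k-1$ monochromatic pairs, so the maximum degree of the dependency graph is $d\approx 4(k-1)n=\Theta(kn)$. With $p=\Theta(1/n^2)$ and $d=\Theta(kn)$, the Local Lemma condition (of the form $c'\,p\,d\le 1$ for the appropriate absolute constant $c'$) reduces to $\Theta(k/n)\le 1$, and optimizing the constants produces exactly the threshold $k\le\lceil cn\rceil$ with $c<1/32$. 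For $n$ large enough the inequality is strict, so the probability that $H$ avoids every bad event is positive and a heterochromatic Hamiltonian cycle exists.

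The main obstacle is the dependency analysis, not the two expectation computations. Because the edge-indicator variables of a uniform Hamiltonian cycle are \emph{not} independent even for vertex-disjoint edges, one cannot simply invoke the symmetric Local Lemma with a purely combinatorial (edge-sharing) dependency graph. The heart of the argument is to verify the lopsidependency hypothesis: that conditioning on the non-occurrence of any family of bad events whose edge pairs are vertex-disjoint from $\{e,f\}$ does not raise $\Pr[A_{e,f}]$ above $p$. Establishing this demands a genuine negative-correlation (or direct conditional-probability) estimate for random Hamiltonian cycles, and it is the sharpness of that estimate, together with the degree bound $d\approx 4kn$, that is responsible for the precise constant $1/32$.
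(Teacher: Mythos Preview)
The paper does not prove this theorem at all: Theorem~\ref{thm-AlFrRe95-1} is quoted from \cite{AlFrRe95} purely as background, with no accompanying argument, so there is nothing here to compare your attempt against. Your sketch is, in outline, the original Albert--Frieze--Reed argument (uniform random Hamiltonian cycle, bad events $A_{e,f}$ for monochromatic edge pairs, lopsided Local Lemma), and you correctly identify that the substantive work lies in justifying the lopsidependency/negative-correlation hypothesis rather than in the easy probability and degree estimates. As written, however, it is only a plan: the crucial step---showing that conditioning on the complement of vertex-disjoint bad events does not inflate $\Pr[A_{e,f}]$---is asserted but not carried out, and the derivation of the constant $1/32$ from the explicit values of $p$ and $d$ is left as ``optimizing the constants'' rather than performed. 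If you want this to stand as a proof you must actually execute those two pieces; if you only intend to cite the result, as the present paper does, no proof is needed.
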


In this paper,
we will propose a generalization of heterochromatic graphs,
which is also a generalization of $k$-bounded colored graphs.
Moreover,
we will generalize
Theorem \ref{thm-Su06-1}, \ref{thm-JiLi06-2}, \ref{thm-Su06-2},
and Problem \ref{pro-100412-1}.

\section{Heterochromatic and $f$-chromatic graphs}

{\it Heterochromatic} or {\it $k$-bounded colored}
means that
any color appears at most {\it once} or $k$ times, respectively.
We propose to generalize {\it once} and $k$ to a mapping $f$
from a given color set $\C$ to the set of non-negative integers.
We introduce the following definition as a generalization
of heterochromatic or $k$-bounded colored graphs%
\footnote{We name it after {\it heterochromatic}.
Of course, we may name it as {\it $f$-bounded colored}.}.

\begin{definition}
Let $f$ be a mapping from a given color set $\C$
to the set of non-negative integers.
An edge-colored graph $(G, \C, color)$ is said to be
\textit{$f$-chromatic}
if $|E_c(G)| \le f(c)$ for any color $c \in \C$. 
\label{def-100219-1}
\end{definition}

Fig.\ref{fig-100629-1} shows an example
of an $f$-chromatic spanning tree of an edge-colored graph.
Let $\C = \{1,2,3,4,5,6,7 \}$ be a given color set of $7$ colors,
and a mapping $f$ is given as follows:
$f(1)=3$,
$f(2)=1$,
$f(3)=3$,
$f(4)=0$,
$f(5)=0$,
$f(6)=1$,
$f(7)=2$.
Then, the left edge-colored graph in Fig.\ref{fig-100629-1}
has the right graph as a subgraph.
It is a spanning tree
where each color $c$ appears at most $f(c)$ times.
Thus, it is an $f$-chromatic spanning tree.

\fig{\textwidth}{!}{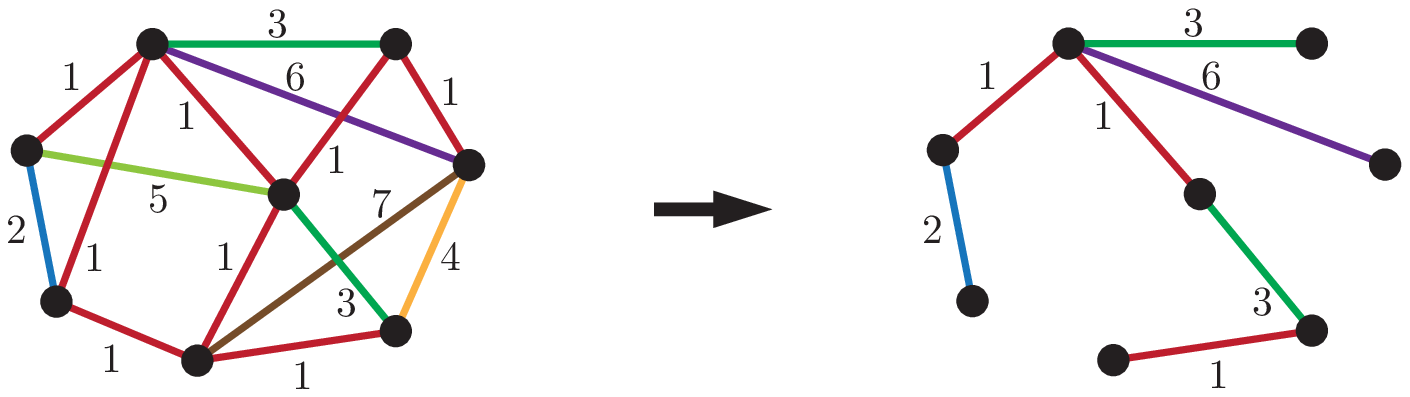}
{An $f$-chromatic spanning tree of an edge-colored graph.}
{fig-100629-1}

If $f(c) = 1$ for any color $c$,
then all $f$-chromatic graphs are heterochromatic
and also all heterochromatic graphs are $f$-chromatic.
We expect many previous studies and results
for heterochromatic subgraphs
will be generalized.
For example,
we give the following generalization of Problem \ref{pro-100412-1}.

\begin{problem}
Find a relationship between two functions $f$ and $g$
such that every $g$-chromatic complete graph $K_n$
contains an $f$-chromatic Hamiltonian cycle
(Hamiltonian path, spanning tree, or other subgraph).
\label{pro-100412-2}
\end{problem}

In this paper,
we generalize Theorems \ref{thm-Su06-1}, \ref{thm-JiLi06-2}, and \ref{thm-Su06-2}.
Let $\C$ be a color set,
and $f$ be a mapping from $\C$ to the set of non-negative integers.
We present the following necessary and sufficient condition
for graphs to have an $f$-chromatic spanning forest
with exactly $m$ components.

\begin{theorem}
An edge-colored graph $(G, \C, color)$ of order at least $m$
has an $f$-chromatic spanning forest with exactly $m$ components
if and only if
\begin{equation*}
\omega(G-E_R(G)) \le m+\sum_{c \in R}f(c) \text{~~~~~ for any } R \subseteq \C.
\end{equation*}
\label{thm-091015-1}
\end{theorem}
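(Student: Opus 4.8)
Since the statement is an equivalence, the plan is to prove the two implications separately; I expect the ``only if'' direction to be routine and the ``if'' direction to carry all the difficulty.

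For necessity, suppose $G$ has an $f$-chromatic spanning forest $F$ with exactly $m$ components, so that $F$ has $|V(G)|-m$ edges. I would fix $R\subseteq\C$ and delete from $F$ the edges whose colors lie in $R$. Because $F$ is $f$-chromatic, at most $\sum_{c\in R}f(c)$ edges are deleted, and deleting an edge from a forest raises the number of components by exactly one, so $\omega(F-E_R(F)) \le m+\sum_{c\in R}f(c)$. Since $F-E_R(F)$ is a spanning subgraph of $G-E_R(G)$, its component count is at least that of $G-E_R(G)$, which yields the desired inequality.

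For sufficiency, my plan is to phrase the existence question as a matroid intersection on the ground set $E(G)$. I would take $M_1$ to be the graphic matroid of $G$, with rank $r_1(A)=|V(G)|-\omega((V(G),A))$, and $M_2$ to be the partition matroid induced by the color classes $E_c(G)$ with capacities $f(c)$, with rank $r_2(B)=\sum_{c\in\C}\min(|B\cap E_c(G)|,f(c))$. A common independent set of size $|V(G)|-m$ is exactly an acyclic, $f$-chromatic edge set with $|V(G)|-m$ edges, i.e.\ an $f$-chromatic spanning forest with exactly $m$ components; as every subset of a common independent set is again one, it suffices to produce a common independent set of size at least $|V(G)|-m$. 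By the matroid intersection theorem this reduces to verifying $r_1(A)+r_2(E(G)\setminus A)\ge |V(G)|-m$ for every $A\subseteq E(G)$. Using the standard fact that this minimum is attained at a flat of $M_1$, I may assume $A$ is the set of edges lying inside the parts of a partition $\mathcal{P}=\{V_1,\dots,V_t\}$ of $V(G)$ with $t=\omega((V(G),A))$, so that $B:=E(G)\setminus A$ is the set of edges crossing $\mathcal{P}$ and $r_1(A)=|V(G)|-t$. The whole sufficiency then collapses to the single inequality $r_2(B)\ge t-m$.

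To establish this I would set $R=\{c\in\C: f(c)\le |B\cap E_c(G)|\}$, so that $r_2(B)=\sum_{c\in R}f(c)+|B'|$, where $B'=B\cap E_{\C\setminus R}(G)$ is precisely the set of crossing edges of $H:=G-E_R(G)$. The hypothesis applied to this $R$ gives $\sum_{c\in R}f(c)\ge \omega(H)-m$, so everything comes down to the purely combinatorial claim $\omega(H)+|B'|\ge t$, which I expect to be the crux. To prove it I would contract each part $V_i$ of $\mathcal{P}$ to a single vertex in $H$: the resulting multigraph $H/\mathcal{P}$ has $t$ vertices and exactly $|B'|$ non-loop edges (the within-part edges become loops and are discarded), hence $\omega(H/\mathcal{P})\ge t-|B'|$. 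On the other hand, adding within-part connecting edges to $H$ never increases the number of components and leaves the crossing edges unchanged, so contracting the now internally connected parts preserves the component count, giving $\omega(H/\mathcal{P})\le \omega(H)$. Combining, $t-|B'|\le \omega(H)$, which is the claim. Feeding it back yields $r_2(B)\ge (\omega(H)-m)+(t-\omega(H))=t-m$, hence $r_1(A)+r_2(E(G)\setminus A)\ge |V(G)|-m$ for every $A$, which completes the sufficiency. The main obstacle is thus isolating and proving the contraction inequality $\omega(H)+|B'|\ge t$; once the problem is set up as a graphic/partition matroid intersection, the rest is bookkeeping.
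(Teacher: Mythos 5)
Your proof is correct, but it takes a genuinely different route from the paper. Your necessity argument coincides with the paper's. For sufficiency, however, the paper gives a self-contained, purely graph-theoretic proof by contradiction: it introduces \emph{saturated triples} $\langle F, C_0, C_1\rangle$ of a forest and two color sets, shows one exists, takes one with $C_0$ maximal, and derives a contradiction by constructing a strictly larger saturated triple via an edge-exchange argument inside the components. You instead reduce the statement to Edmonds' matroid intersection theorem applied to the graphic matroid of $G$ and the partition matroid with capacities $f(c)$ on the color classes, and the only real work is the reduction to flats of the graphic matroid plus the contraction inequality $\omega(H)+|B'|\ge t$ --- all of which you carry out correctly (the choice $R=\{c : f(c)\le |B\cap E_c(G)|\}$ exactly matches the rank computation, and the quotient-multigraph bound $\omega(H/\mathcal{P})\ge t-|B'|$ together with $\omega(H/\mathcal{P})\le\omega(H)$ is sound). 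Your route is in the spirit of the Akbari--Alipour proof via Rado's theorem that the paper explicitly cites as the alternative approach for the special case $m=1$, $f\equiv 1$; it is shorter modulo the black box, generalizes cleanly (Corollary \ref{cor-100713-1} also drops out of the same min-max by fixing the sizes), and yields a polynomial-time algorithm for finding the forest. What the paper's approach buys is an elementary proof with no matroid machinery and the reusable notion of saturated conditions.
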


Note that, it is allowed $R = \emptyset$,
that is,
the condition includes the necessary condition
that $\omega(G) \le m$ to have a spanning forest with $m$ components.
From this theorem,
we can obtain the following corollary.

\begin{corollary}
Let $\C=\{ c_1,c_2,\ldots,c_r \}$
and $f$ be a mapping from $\C$ to the set of non-negative integers
such that $\sum_{i=1}^{r}f(c_i)=n-m$.
An edge-colored graph $(G, \C, color)$ of order $n$
has an spanning  forest with exactly $m$ components
and exactly $f(c_i)$ edges for each color $c_i$
if and only if
\begin{equation*}
\omega(G-E_R(G)) \le m+\sum_{c \in R}f(c) \text{~~~~~ for any } R \subseteq \C.
\end{equation*}
\label{cor-100713-1}
\end{corollary}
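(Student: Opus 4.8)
The plan is to derive Corollary \ref{cor-100713-1} as a direct consequence of Theorem \ref{thm-091015-1}, since the hypothesis $\sum_{i=1}^{r} f(c_i) = n-m$ is what converts the general existence statement about $f$-chromatic spanning forests into the sharper conclusion about forests using \emph{exactly} $f(c_i)$ edges of each color. The key observation is an edge-counting identity. A spanning forest of $G$ (a graph of order $n$) with exactly $m$ components has exactly $n-m$ edges. An $f$-chromatic spanning forest $F$ uses at most $f(c_i)$ edges of color $c_i$, so its total number of edges is at most $\sum_{i=1}^{r} f(c_i) = n-m$. But $F$ has exactly $n-m$ edges. Hence every one of these inequalities must be an equality: $F$ uses exactly $f(c_i)$ edges of each color $c_i$.

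First I would prove the forward direction. Suppose $G$ has a spanning forest with exactly $m$ components and exactly $f(c_i)$ edges of each color $c_i$. Such a forest is in particular $f$-chromatic (each color appears at most $f(c_i)$ times, indeed exactly that many), so $G$ has an $f$-chromatic spanning forest with exactly $m$ components. By the necessity direction of Theorem \ref{thm-091015-1}, the inequality $\omega(G-E_R(G)) \le m + \sum_{c \in R} f(c)$ holds for every $R \subseteq \C$.

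Next I would prove the converse. Assume the inequality holds for every $R \subseteq \C$. By the sufficiency direction of Theorem \ref{thm-091015-1}, $G$ has an $f$-chromatic spanning forest $F$ with exactly $m$ components. Now I invoke the counting identity above: since $F$ has order $n$ and $m$ components it has exactly $n-m$ edges, while $f$-chromaticity forces $|E(F)| = \sum_{i=1}^{r} |E_{c_i}(F)| \le \sum_{i=1}^{r} f(c_i) = n-m$, with equality throughout. Therefore $|E_{c_i}(F)| = f(c_i)$ for each $i$, and $F$ is the desired spanning forest with exactly $f(c_i)$ edges of each color. This completes both directions.

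I do not anticipate a serious obstacle here, as the corollary is essentially a specialization of the theorem plus a pigeonhole-style counting argument; the only point requiring care is confirming that the constraint $\sum f(c_i) = n-m$ exactly matches the edge count $n-m$ of a spanning forest with $m$ components, which forces the per-color inequalities to be tight. One should also note that the hypothesis implicitly guarantees $G$ has order at least $m$ (needed to apply Theorem \ref{thm-091015-1}), which follows from the $R = \emptyset$ case giving $\omega(G) \le m$ together with $n \ge \omega(G)$.
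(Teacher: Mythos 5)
Your proof is correct and is exactly the intended derivation: the paper offers no separate proof of Corollary~\ref{cor-100713-1}, presenting it as an immediate consequence of Theorem~\ref{thm-091015-1}, and your edge-counting observation (a spanning forest with $m$ components on $n$ vertices has $n-m$ edges, so $\sum_i |E_{c_i}(F)| \le \sum_i f(c_i) = n-m$ forces equality in every term) is precisely the step that bridges the two. One quibble with your final remark: the inference ``$\omega(G)\le m$ and $n\ge\omega(G)$, hence $n\ge m$'' is not valid, but the needed bound $n\ge m$ follows trivially from the hypothesis that $f$ is non-negative with $\sum_{i=1}^{r}f(c_i)=n-m\ge 0$, so nothing is lost.
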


This corollary is interesting
because in the corollary
we can not only desire an $f$-chromatic spanning forest,
but also fix the exactly number of edges for each color.

Theorem \ref{thm-091015-1} will be proved in the next section.
By applying Theorem \ref{thm-091015-1},
we will prove the following theorem.

\begin{theorem}
A $g$-chromatic graph $G$ of order $n$ with $|E(G)|>\binom{n-m}{2}$
has an $f$-chromatic spanning forest with exactly $m$ $(1 \le m \le n-1)$ components
if $g(c) \le \frac{|E(G)|}{n-m}f(c)$ for any color $c$.
\label{thm-100503-1}
\end{theorem}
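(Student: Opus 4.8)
The plan is to deduce this from the necessary and sufficient condition of Theorem~\ref{thm-091015-1}. Since $1 \le m \le n-1$ gives $n \ge m$, the graph $G$ has order at least $m$, so it suffices to verify that
\[
\omega(G-E_R(G)) \le m+\sum_{c \in R}f(c)
\]
holds for every $R \subseteq \C$; Theorem~\ref{thm-091015-1} then immediately delivers the $f$-chromatic spanning forest with exactly $m$ components. I would therefore fix an arbitrary $R \subseteq \C$ and, for brevity, write $F=\sum_{c \in R}f(c)$ and $G'=G-E_R(G)$. First I would dispose of the trivial case $F \ge n-m$: there $m+F \ge n \ge \omega(G')$ holds automatically, since any graph on $n$ vertices has at most $n$ components. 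Hence I may assume $0 \le F \le n-m-1$ and set $k=n-m-F$, so that $1 \le k \le n-m$ (note $n-m \ge 1$, so the quantity $\frac{|E(G)|}{n-m}$ is well defined throughout).

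The heart of the argument is a two-sided estimate on $|E(G')|$ under the contrary assumption $\omega(G') \ge m+F+1$. For the lower bound I would combine the $g$-chromaticity of $G$ with the hypothesis $g(c) \le \frac{|E(G)|}{n-m}f(c)$ to get
\[
|E_R(G)|=\sum_{c \in R}|E_c(G)| \le \sum_{c \in R}g(c) \le \frac{|E(G)|}{n-m}\sum_{c \in R}f(c)=\frac{|E(G)|}{n-m}F,
\]
so that $|E(G')|=|E(G)|-|E_R(G)| \ge \frac{|E(G)|}{n-m}(n-m-F)=\frac{|E(G)|}{n-m}k$. For the upper bound I would invoke the standard extremal fact that an $n$-vertex graph with at least $p$ components has at most $\binom{n-p+1}{2}$ edges (achieved by a single clique together with isolated vertices); taking $p=m+F+1=n-k+1$ yields $|E(G')| \le \binom{k}{2}$.

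Combining the two estimates gives $\frac{|E(G)|}{n-m}k \le \binom{k}{2}=\frac{k(k-1)}{2}$, and since $k \ge 1$ I may divide by $k$ to obtain $\frac{|E(G)|}{n-m} \le \frac{k-1}{2}$, i.e. $|E(G)| \le \frac{(n-m)(k-1)}{2}$. Because $k-1=n-m-F-1 \le n-m-1$, this forces $|E(G)| \le \frac{(n-m)(n-m-1)}{2}=\binom{n-m}{2}$, contradicting the hypothesis $|E(G)|>\binom{n-m}{2}$. Therefore $\omega(G') \le m+F$; as $R$ was arbitrary, the condition of Theorem~\ref{thm-091015-1} is verified and the proof concludes.

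The computation is a clean double count, so I do not anticipate a serious obstacle. The only points needing care are the boundary case $F \ge n-m$ (handled separately, since otherwise the extremal bound would involve $\binom{\cdot}{2}$ with a non-positive argument) and the legitimacy of dividing by $k$, both cleared by the reduction to $1 \le k \le n-m$. The single load-bearing ingredient is the extremal edge bound $\binom{n-p+1}{2}$, which is precisely what converts ``too many components'' in $G'$ into ``too few remaining edges'' and thereby clashes with the lower bound forced by the relation between $g$ and $f$.
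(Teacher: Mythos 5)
Your proposal is correct and follows essentially the same route as the paper: both reduce to the criterion of Theorem~\ref{thm-091015-1} and combine the bound $\sum_{c\in R}|E_c(G)|\le\sum_{c\in R}g(c)\le\frac{|E(G)|}{n-m}\sum_{c\in R}f(c)$ with the extremal fact (the paper's Lemma~\ref{lem-110208-1}) that a graph on $n$ vertices with at least $p$ components has at most $\binom{n-p+1}{2}$ edges, arriving at the identical inequality $|E(G)|\le\binom{n-m}{2}$. The only cosmetic differences are that you count $|E(G')|$ where the paper counts the complementary quantity $q$, you verify the condition for each $R$ rather than extracting a violating $R$ from a global contradiction (which is why you need, harmlessly, the extra case $\sum_{c\in R}f(c)\ge n-m$), and you cite the extremal bound as standard where the paper proves it as a lemma.
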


In order to prove Theorem \ref{thm-100503-1},
we need the following Lemma.

\begin{lemma}
Let $G$ be a graph of order $n$
that consists of $s$ components.
Then $|E(G)| \le \binom{n-(s-1)}{2}$.
\label{lem-110208-1}
\end{lemma}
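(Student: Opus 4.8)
The plan is to reduce the statement to a purely arithmetic optimization and then settle it by a convexity (exchange) argument. Let $G_1, \dots, G_s$ be the components of $G$, and write $n_i = |V(G_i)|$, so that each $n_i \ge 1$ and $\sum_{i=1}^s n_i = n$. Since $G_i$ is a graph on $n_i$ vertices, it has at most $\binom{n_i}{2}$ edges, and therefore
\begin{equation*}
|E(G)| = \sum_{i=1}^s |E(G_i)| \le \sum_{i=1}^s \binom{n_i}{2}.
\end{equation*}
It thus suffices to prove that $\sum_{i=1}^s \binom{n_i}{2} \le \binom{n-(s-1)}{2}$ whenever $n_1, \dots, n_s$ are positive integers summing to $n$.

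The key step is to show that this sum is largest when the part sizes are as unbalanced as possible, namely $n_1 = n-(s-1)$ and $n_2 = \cdots = n_s = 1$. To this end I would use the convexity of $\binom{x}{2}$ in the form of an exchange argument. Suppose two of the parts, of sizes $a \ge b$, both satisfy $b \ge 2$. Replacing them by parts of sizes $a+1$ and $b-1$ keeps both the total $n$ and the number of parts $s$ fixed, while it changes $\sum_i \binom{n_i}{2}$ by
\begin{equation*}
\left( \binom{a+1}{2} - \binom{a}{2} \right) - \left( \binom{b}{2} - \binom{b-1}{2} \right) = a - (b-1) = a - b + 1 \ge 1 > 0 .
\end{equation*}
Hence moving a vertex from a smaller part into a larger one strictly increases the sum. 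Iterating this operation, we reach a configuration in which at most one part has more than one vertex, that is, $n_1 = n-(s-1)$ and $n_2 = \cdots = n_s = 1$, for which
\begin{equation*}
\sum_{i=1}^s \binom{n_i}{2} = \binom{n-(s-1)}{2} + (s-1)\binom{1}{2} = \binom{n-(s-1)}{2}.
\end{equation*}
Combining this with the first displayed inequality yields the claim.

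This lemma is genuinely elementary, so there is no deep obstacle; the only point requiring a little care is to justify that the exchange process is legitimate and terminates at the stated extremal configuration. This is handled by observing that $\sum_i \binom{n_i}{2}$ strictly increases at each step and is bounded above, so no infinite chain of exchanges is possible, and that the process can continue exactly as long as two parts both have size at least two, so it halts precisely at the configuration with one large part and $s-1$ singletons. An alternative route would be a short induction on $s$, peeling off a singleton component and applying the bound to the remaining $s-1$ components on $n-1$ vertices, but one would still have to invoke the same convexity inequality to check that detaching a vertex from the largest component does not decrease the edge count, so the exchange argument seems the cleanest path.
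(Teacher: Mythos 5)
Your proof is correct and is essentially the same as the paper's: both rest on the exchange argument that moving a vertex from a smaller part of size at least two into a largest part strictly increases the edge (or $\sum_i \binom{n_i}{2}$) count, so the extremum is one large clique plus $s-1$ isolated vertices. The only cosmetic difference is that you first reduce to the arithmetic inequality on component sizes, whereas the paper applies the exchange directly to an edge-maximal graph.
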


\begin{proof}
Take a graph $G^*$ with the maximum number of edges
that satisfies the condition in the Lemma.
By the maximality of $G^*$,
each component is complete.
Let $D_s$ be a maximum component of $G^*$.
Suppose that
some component $D$ except $D_s$ has at least two vertices.
Let $x$ be a vertex of $D$.
Let $D' = D-x$
and $D'_s = (V(D_s) \cup \{x \}, E(D_s) \cup \{xz ~|~ z \in D_s \})$.
Then, we have
$|E(D)|+|E(D_s)| < |E(D')|+|E(D'_s)|$,
which contradicts the maximality of $E(G^*)$.
Thus, every component except $D_s$ has exactly one vertex,
which implies that
$|V(D_s)|=n-(s-1)$.
Therefore,
$|E(G)| \le |E(G^*)| = \binom{n-(s-1)}{2}$.
\end{proof}

We here prove Theorem \ref{thm-100503-1}
using Lemma \ref{lem-110208-1}.

\begin{proof}
Suppose that $G$ has no $f$-chromatic spanning forests
with exactly $m$ components.
By Theorem \ref{thm-091015-1},
there exists a color set $R \subseteq \C$
such that
\begin{equation*}
\omega(G-E_R(G)) > m+\sum_{c \in R}f(c).
\end{equation*}

Let $s = \omega(G-E_R(G))$ and $r = \sum_{c \in R}f(c)$.
Then we have
\begin{equation}
n \ge s > m+r.
\label{eq-100502-1}
\end{equation}

Let $D_1, D_2, \ldots, D_s$
be the components of $G-E_R(G)$,
and $q$ be the number of edges of $G$ between these distinct components.
Note that,
the colors of these $q$ edges are only in $R$.
By the assumption on the function $g$,
we get 
\begin{equation}
q \le \sum_{c \in R}g(c)
\le \sum_{c \in R}\frac{|E(G)|}{n-m}f(c)
= \frac{|E(G)|}{n-m}r.
\label{eq-100502-2}
\end{equation}

On the other hand,
\begin{eqnarray*}
q &=& |E(G)| - |E(D_1) \cup E(D_2) \cup \cdots \cup E(D_s)|.
\end{eqnarray*}
By Lemma \ref{lem-110208-1},
$|E(D_1) \cup E(D_2) \cup \cdots \cup E(D_s)| \le \binom{n-(s-1)}{2}$.
Thus, since $s \ge r+m+1$ by (\ref{eq-100502-1}), we have
\begin{eqnarray*}
q &=& |E(G)| - |E(D_1) \cup E(D_2) \cup \cdots \cup E(D_s)|\\
  &\ge & |E(G)| - \binom{n-(s-1)}{2}
    \ge |E(G)| - \binom{n-(r+m)}{2}\\
  &=& |E(G)| - \frac{(n-r-m)(n-r-m-1)}{2}.
\end{eqnarray*}
Hence, by (\ref{eq-100502-2}),
\begin{equation*}
|E(G)| - \frac{(n-r-m)(n-r-m-1)}{2} \le \frac{|E(G)|}{n-m}r,
\end{equation*}
and thus,
\begin{equation*}
\frac{n-m-r}{n-m}|E(G)| \le \frac{(n-r-m)(n-r-m-1)}{2}.\\
\end{equation*}
Since $n-r-m>0$ by (\ref{eq-100502-1}),
by dividing both sides of the equation by $n-r-m$
\begin{equation*}
\frac{|E(G)|}{n-m} \le \frac{n-r-m-1}{2},
\end{equation*}
and thus,
\begin{equation*}
|E(G)| \le \frac{(n-m)(n-r-m-1)}{2}.\\
\end{equation*}
Since $r \ge 0$, we get $|E(G)| \le \binom{n-m}{2}$,
which contradicts our assumption.
Therefore, $G$ has an $f$-chromatic spanning forest with $m$ components.
\end{proof}

We can obtain the following corollary from Theorem \ref{thm-100503-1}.

\begin{corollary}
A $g$-chromatic complete graph $K_n$
has an $f$-chromatic spanning forest
with exactly $m$ $(1 \le m \le n-1)$ components
if $g(c) \le \frac{n(n-1)}{2(n-m)}f(c)$ for any color $c$.
\label{cor-091015-2}
\end{corollary}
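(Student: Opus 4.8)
The plan is to deduce this corollary directly from Theorem \ref{thm-100503-1} by specializing to the case $G = K_n$ and evaluating the edge count. First I would recall that the complete graph on $n$ vertices has exactly $|E(K_n)| = \binom{n}{2} = \frac{n(n-1)}{2}$ edges, so the entire argument reduces to substituting this value into the hypotheses and conclusion of Theorem \ref{thm-100503-1}; no independent combinatorial reasoning should be required.

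Next I would verify that the size condition $|E(G)| > \binom{n-m}{2}$ demanded by Theorem \ref{thm-100503-1} is automatically satisfied in this setting. Since $1 \le m \le n-1$, we have $1 \le n-m < n$, and because $\binom{k}{2}$ is strictly increasing in $k$ for $k \ge 1$, it follows that $\binom{n-m}{2} < \binom{n}{2} = |E(K_n)|$. Thus $K_n$ meets both the order requirement and the strict edge-count requirement of the theorem for every admissible $m$ in the stated range.

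Finally I would translate the coloring hypothesis. Substituting $|E(G)| = \frac{n(n-1)}{2}$ into the inequality $g(c) \le \frac{|E(G)|}{n-m}f(c)$ of Theorem \ref{thm-100503-1} yields precisely
\begin{equation*}
g(c) \le \frac{n(n-1)}{2(n-m)}f(c),
\end{equation*}
which is exactly the hypothesis of the corollary. Hence, under the stated relationship between $g$ and $f$, Theorem \ref{thm-100503-1} applies and produces an $f$-chromatic spanning forest of $K_n$ with exactly $m$ components, as desired.

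Since this is a pure specialization, I do not anticipate a genuine obstacle. The only point requiring a moment of care is confirming the strict inequality $\binom{n-m}{2} < \binom{n}{2}$ so that the edge-count hypothesis of Theorem \ref{thm-100503-1} is truly met across the full range $1 \le m \le n-1$; beyond this monotonicity check, the result follows by arithmetic substitution alone.
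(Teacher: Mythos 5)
Your proposal is correct and matches the paper's derivation: the paper presents this corollary as an immediate specialization of Theorem \ref{thm-100503-1} to $G=K_n$, with $|E(K_n)|=\binom{n}{2}=\frac{n(n-1)}{2}>\binom{n-m}{2}$ for $1\le m\le n-1$. Your monotonicity check of $\binom{k}{2}$ supplies the one detail the paper leaves implicit.
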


Theorems \ref{thm-Su06-1} and \ref{thm-Su06-2} are special cases of
Theorem \ref{thm-091015-1} and Corollary \ref{cor-091015-2}
with $m=1$ and $f(c)=1$.
Theorem \ref{thm-JiLi06-2} is a special case
of Theorem \ref{thm-091015-1} for connected graphs
with $m=n-k$ and $f(c)=1$.
The proof of Theorem \ref{thm-100503-1} is essentially the same as
the proof of Theorem \ref{thm-Su06-2} in \cite{Su06}.
We can also prove Theorem \ref{thm-091015-1}
in a similar way as the proof of Theorem \ref{thm-Su06-1} in \cite{Su06}.
However, in this paper,
we will improve the proof
by introducing the new notion of \textit{Saturated Conditions}.

\section{Proof of Theorem \ref{thm-091015-1}}

We begin with some notation.
We use the symbol $\subset$ to denote proper inclusion.
We often denote an edge $e = \{ x,y \}$ by $xy$ or $yx$.
For a graph $G$ and a subset $E \subseteq E(G)$,
we denote the graphs
$(V(G), E(G) \setminus E)$ and $(V(G), E(G) \cup E)$
by $G-E$ and $G+E$, respectively.
Similarly,
for an edge $e$ whose end vertices are in $V(G)$,
we denote the graphs
$(V(G), E(G) \setminus \{ e \})$ and $(V(G), E(G) \cup \{ e \})$
by $G-e$ and $G+e$, respectively.
For an edge-colored graph $(G, \C, color)$ and an edge set $E \subseteq E(G)$,
we define $color(E) = \{ color(e) ~|~ e \in E \}$
and $color(G) = color(E(G))$.

First, we prove the necessity.
Let $F$ be an $f$-chromatic spanning forest of $G$
with exactly $m$ components.
Consider $G-E_R(G)$ for any color subset $R \subseteq \C$.
Since $F - E_R(F)$ is a spanning forest of $G-E_R(G)$,
we have $\omega(G-E_R(G)) \le \omega(F - E_R(F))$.
Moreover, $\omega(F - E_R(F)) = m+|E_R(F)|$
because $F$ is a forest with exactly $m$ components.
By the definition of $f$-chromatic graphs,
$|E_c(F)| \le f(c)$ for any color $c$.
Thus, we have
\begin{equation*}
|E_R(F)| = \sum\limits_{c \in R}|E_c(F)| \le \sum\limits_{c \in R}f(c).
\end{equation*}
Hence,
\begin{equation*}
\omega(G-E_R(G)) \le \omega(F - E_R(F)) = m+|E_R(F)| \le m+\sum\limits_{c \in R}f(c).
\end{equation*}

Next, we prove the sufficiency by contradiction.
Suppose that $G$ has no $f$-chromatic spanning forests with exactly $m$ components.

\begin{claim}
Any $f$-chromatic spanning forest of $G$ has at least $m+1$ components.
\label{claim-091216-1}
\end{claim}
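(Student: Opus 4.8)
The plan is to read this claim against the two facts already in force at this point of the sufficiency argument: the standing hypothesis (for contradiction) that $G$ has \emph{no} $f$-chromatic spanning forest with exactly $m$ components, and the structural assumption that $G$ has order at least $m$. Under these, Claim \ref{claim-091216-1} is really just a normalization statement, and I would prove it by contradiction. Suppose some $f$-chromatic spanning forest $F$ of $G$ satisfies $\omega(F) \le m$; I will manufacture from $F$ an $f$-chromatic spanning forest with exactly $m$ components, contradicting the hypothesis.

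First I would dispose of the boundary case $\omega(F) = m$, which contradicts the hypothesis outright. So I may assume $\omega(F) = k$ with $k < m$. A spanning forest of an $n$-vertex graph with $k$ components has exactly $n - k$ edges, and since $n \ge m > k$ there are at least $m - k \ge 1$ edges available to delete. I would then delete $m - k$ edges of $F$ one at a time. Two elementary monotonicity facts do all the work: deleting an edge from a forest increases the number of components by exactly one (its two endpoints lie in a common tree, which splits into exactly two), so after $m - k$ deletions the resulting spanning forest $F'$ has exactly $k + (m - k) = m$ components; and since $E(F') \subseteq E(F)$ we have $|E_c(F')| \le |E_c(F)| \le f(c)$ for every color $c$, so $F'$ is again $f$-chromatic. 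Thus $F'$ is an $f$-chromatic spanning forest with exactly $m$ components, the desired contradiction, which forces $\omega(F) \ge m+1$ for every $f$-chromatic spanning forest $F$.

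I do not expect a genuine obstacle in this claim. The only points needing care are that $n \ge m$ guarantees enough edges to delete, and that $f$-chromaticity is inherited under edge deletion (immediate from Definition \ref{def-100219-1}, since passing to a subgraph can only decrease each $|E_c|$). The substance of the sufficiency proof lies beyond Claim \ref{claim-091216-1}, in exploiting the full hypothesis $\omega(G - E_R(G)) \le m + \sum_{c \in R} f(c)$ for \emph{all} $R \subseteq \C$; this claim merely licenses us to fix, from now on, an $f$-chromatic spanning forest whose component count is at least $m+1$ (for instance one with the fewest components, equivalently the most edges), which is the natural starting configuration for the exchange/augmentation argument that will contradict the hypothesis.
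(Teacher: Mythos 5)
Your proof is correct and follows essentially the same route as the paper's: the paper likewise argues that a forest with at most $m-1$ components can be turned into one with exactly $m$ components by removing edges one by one, contradicting the standing assumption. Your version merely makes explicit the edge count ($n-k \ge m-k$ edges available) and the preservation of $f$-chromaticity under deletion, both of which the paper leaves implicit.
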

\begin{proof}
If there exists an $f$-chromatic spanning forest of $G$
with at most $m-1$ components,
then it can be turned into an $f$-chromatic spanning forest of $G$
with exactly $m$ components
by removing edges one by one,
which contradicts our assumption.
\end{proof}

We denote by $E^*_F$ the set of edges
between the components of a forest $F$ in $G$,
namely,
\begin{equation*}
E^*_F := \{ xy \in E(G) | \text{ $x$ and $y$ are not in the same components of $F$} \}.
\end{equation*}

Fig.\ref{fig-100702-1} shows
an example of $E^*_F$.
Let $D_1, D_2, \ldots, D_7$ be the components of $G - E^*_F$,
which is induced by the components of $F$.
We simplify the left graph to the right illustration.

\fig{\textwidth}{!}{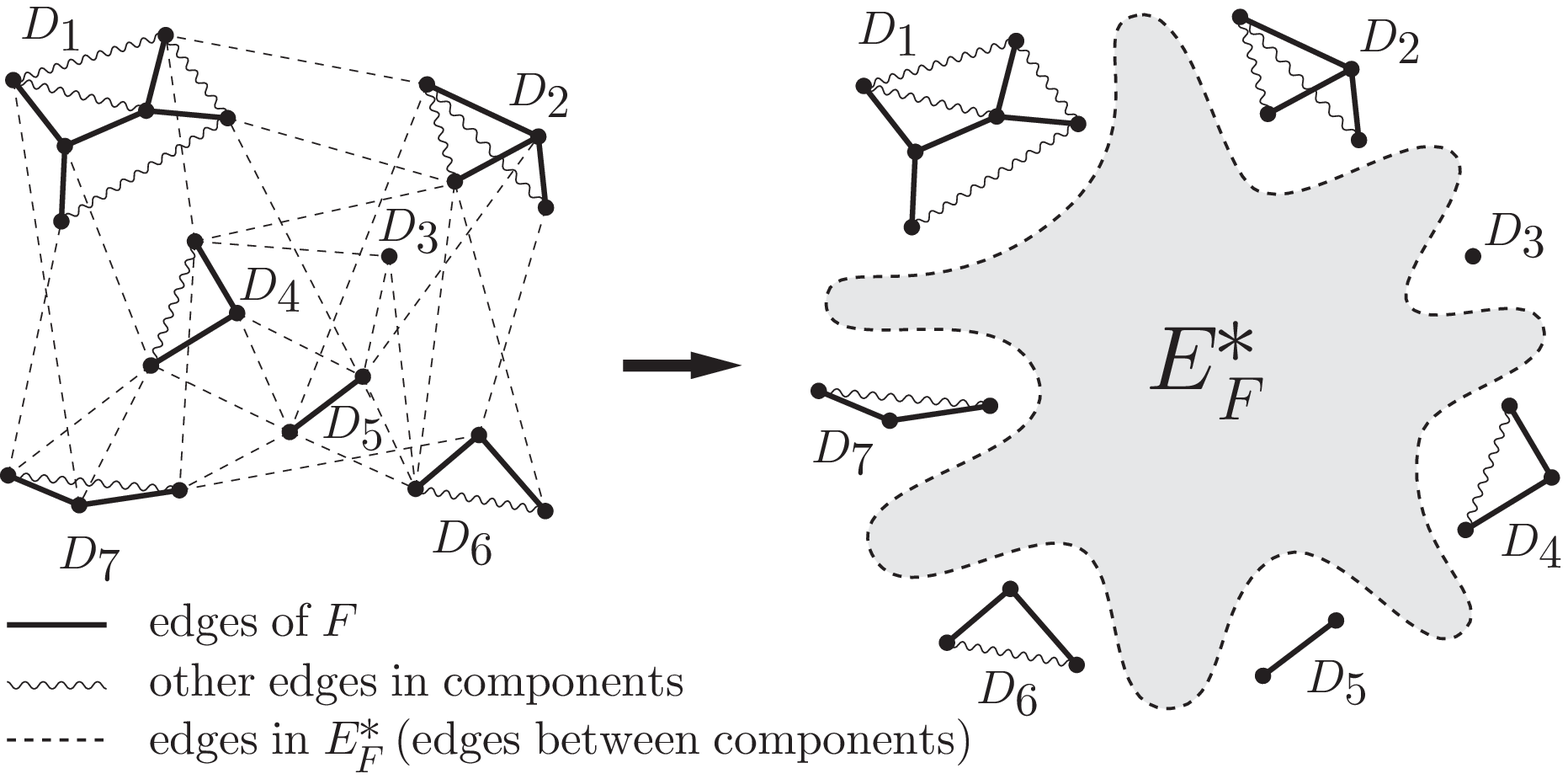}
{An example of $E^*_F$, which is the set of edges
between the components of a forest $F$ in $G$.}
{fig-100702-1}

For an $f$-chromatic spanning forest $F$ of $G$ and two sets of colors $C_0, C_1$,
the triple $<F,C_0,C_1>$ is said to be \textit{saturated}
if the following conditions hold:
\begin{enumerate}
\item[] \underline{Saturated Conditions}
\begin{enumerate}
\item[(i)] $C_0 \cap C_1 = \emptyset$,
\item[(ii)] $C_0 \cup C_1 = color(E^*_F)$,
\item[(iii)] $C_0 \cap color(F) = \emptyset$,
\item[(iv)] $\omega(F) \ge m+1+\sum\limits_{c \in C_0}f(c)$,
\item[(v)] $|E_c(\tilde{F})| = f(c)$ for every color $c \in C_1$,
where $\tilde{F}$ is any $f$-chromatic spanning forest  of $G$
such that $E^*_{\tilde{F}} = E^*_F$ and $C_0 \cap color(\tilde{F}) = \emptyset$.
\end{enumerate}
\end{enumerate}

Saturated Conditions imply that
if already $\tilde{F}$ has $f(c)$ edges for every color $c \in C_1$,
then we can not add more edges in $E^*_{\tilde{F}}$
whose color appears on $\tilde{F}$
in order to get a larger $f$-chromatic spanning forest,
namely, we call that \textit{saturated}.
Note that,
it is allowed that
$\tilde{F} = F$ in the condition (v).

\begin{claim}
There exists a saturated triple $<F, C_0, C_1>$ in $G$.
\label{claim-091216-2}
\end{claim}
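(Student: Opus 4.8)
The plan is to exhibit a saturated triple explicitly by choosing $F$ extremal and taking $C_0$ empty. Concretely, I would let $F$ be an $f$-chromatic spanning forest of $G$ with the \emph{smallest possible number of components} (such forests exist, since the edgeless spanning forest is $f$-chromatic and there are only finitely many candidates), and then set $C_0 := \emptyset$ and $C_1 := color(E^*_F)$. With these choices, conditions (i), (ii), (iii) are immediate from the definitions, and (iv) collapses to $\omega(F) \ge m+1$ because $\sum_{c \in C_0} f(c) = 0$; this is exactly Claim \ref{claim-091216-1}. So the entire content of the proof is condition (v).

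The first key step is a partition lemma: I would show that $E^*_F$ alone determines the vertex sets of the components of $F$, i.e. the components of $F$ coincide with the components of $G - E^*_F$. Indeed $E(F) \cap E^*_F = \emptyset$, so $F$ sits inside $G - E^*_F$ and each component of $F$ stays connected there, while every $G$-edge joining two distinct $F$-components lies in $E^*_F$ and is deleted. Consequently, any $f$-chromatic spanning forest $\tilde F$ with $E^*_{\tilde F} = E^*_F$ induces exactly the same vertex partition as $F$, and in particular $\omega(\tilde F) = \omega(F)$.

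The second key step verifies (v) by a minimality/augmentation argument. Fix $c \in C_1 = color(E^*_F)$ and let $\tilde F$ be any $f$-chromatic spanning forest with $E^*_{\tilde F} = E^*_F$ (the constraint $C_0 \cap color(\tilde F) = \emptyset$ is vacuous since $C_0 = \emptyset$). As $\tilde F$ is $f$-chromatic we always have $|E_c(\tilde F)| \le f(c)$, so it suffices to rule out $|E_c(\tilde F)| < f(c)$. If that strict inequality held, then since $c \in color(E^*_{\tilde F})$ there would be a crossing edge $e \in E^*_{\tilde F}$ with $color(e) = c$; adding it yields $\tilde F + e$, which is still acyclic (its endpoints lie in different components of $\tilde F$) and still $f$-chromatic (the count of color $c$ rises from below $f(c)$ to at most $f(c)$, all other counts unchanged). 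But $\omega(\tilde F + e) = \omega(\tilde F) - 1 = \omega(F) - 1$ by the partition lemma, contradicting the minimality of $\omega(F)$. Hence $|E_c(\tilde F)| = f(c)$ for every such $c$ and every such $\tilde F$, which is precisely (v), so $<F, C_0, C_1>$ is saturated.

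The main obstacle, and the only place where care is needed, is condition (v), because it quantifies over \emph{all} $f$-chromatic spanning forests sharing the crossing-edge set $E^*_F$, not merely over $F$ itself. The partition lemma is what tames this quantifier: it forces every such $\tilde F$ to have the same component count as $F$, so that a single added crossing edge of an unsaturated color immediately undercuts the minimum. Everything else is bookkeeping. Note that taking $C_0 = \emptyset$ already suffices for mere existence; I would expect the subsequent claims to enlarge $C_0$ while preserving saturation in order to extract the violating color set.
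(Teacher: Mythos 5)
Your proposal is correct, and its engine is the same as the paper's: take $F$ with minimum $\omega(F)$ among all $f$-chromatic spanning forests and verify condition (v) by the augmentation argument (a crossing edge of an under-represented colour would yield an $f$-chromatic forest with fewer components, since any $\tilde{F}$ with $E^*_{\tilde{F}}=E^*_F$ has $\omega(\tilde{F})=\omega(F)$). The one genuine difference is the choice of $C_0$: you take $C_0=\emptyset$ and $C_1=color(E^*_F)$, whereas the paper takes $C_0=color(E^*_F)\setminus color(F)$ and $C_1=color(E^*_F)\setminus C_0$. Your choice makes (iii) and (iv) trivial (the paper instead must argue, again by augmentation, that $f(c)=0$ for every $c\in C_0$ in order to get (iv)), at the price of a larger $C_1$ and a larger class of forests $\tilde{F}$ quantified over in (v); your augmentation argument covers this larger class just as well, including colours $c$ with $f(c)=0$, for which $|E_c(\tilde{F})|=0=f(c)$ holds automatically. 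Both constructions produce a saturated triple, and since the claim only asserts existence --- the subsequent argument then passes to a saturated triple with maximal $C_0$ --- your simpler seed triple is entirely adequate. Your explicit ``partition lemma'' ($\omega(\tilde{F})=\omega(F)$ whenever $E^*_{\tilde{F}}=E^*_F$) is used only implicitly in the paper; making it explicit is a small gain in rigour.
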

\begin{proof}
$G$ has an $f$-chromatic spanning forest,
because the graph $(V(G), \emptyset)$ is an $f$-chromatic spanning forest.
Let $F$ be an $f$-chromatic spanning forest with minimum $\omega(F)$,
and let
\begin{eqnarray*}
C_0 &:=& color(E^*_F) \setminus color(F),\\
C_1 &:=& color(E^*_F) \setminus C_0.
\end{eqnarray*}
Then, the triple $<F, C_0, C_1>$ satisfies
the saturated conditions (i), (ii) and (iii).
Suppose that there exists a color $c \in C_0$ such that $f(c) \ge 1$.
By the definition of $C_0$,
there exists some edge $e \in E^*_F$ with the color $c$.
By the saturated condition (iii),
the spanning forest $F+e$ is $f$-chromatic,
which contradicts the minimality of $\omega(F)$.
Thus, $f(c) = 0$ for any color $c \in C_0$.
Then, by Claim \ref{claim-091216-1}, we have
\begin{eqnarray*}
\omega(F) \ge m+1 = m+1+0 = m+1+\sum\limits_{c \in C_0}f(c).
\end{eqnarray*}
Hence, the triple $<F, C_0, C_1>$ satisfies
the saturated condition (iv).

Let $\tilde{F}$ be any $f$-chromatic spanning forest  of $G$
such that $E^*_{\tilde{F}} = E^*_F$ and $C_0 \cap color(\tilde{F}) = \emptyset$.
Note that $\tilde{F}$ may be $F$.
By the definition of $f$-chromatic graphs,
$|E_c(\tilde{F})| \le f(c)$ for any color $c \in C_1 \subseteq \C$.
Suppose that there exists a color $c \in C_1$ such that $|E_c(\tilde{F})| < f(c)$.
By the definition of $C_1$,
there exists some edge $e \in E^*_F$ with the color $c$.
Since $E^*_{\tilde{F}} = E^*_F$,
the edge $e$ is also in $E^*_{\tilde{F}}$.
Then, the spanning forest $\tilde{F}+e$ is $f$-chromatic,
which contradicts the minimality of $\omega(F)$.
Thus, $|E_c(\tilde{F})| = f(c)$ for any color $c \in C_1$.
Hence, the triple $<F, C_0, C_1>$ satisfies
the saturated condition (v).
Therefore, $<F, C_0, C_1>$ is a saturated triple in $G$.
\end{proof}

Let $<F, C_0, C_1>$ be a saturated triple with maximal $C_0$ in $G$.

\begin{claim}
$C_1 \ne \emptyset$.
\label{claim-110221-1}
\end{claim}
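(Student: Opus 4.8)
The plan is to prove the claim by contradiction, showing that $C_1 = \emptyset$ would force the color set $R = C_0$ to violate the hypothesis $\omega(G - E_R(G)) \le m + \sum_{c \in R} f(c)$ that is in force throughout the sufficiency argument. So I would suppose $C_1 = \emptyset$. By the saturated condition (ii), namely $C_0 \cup C_1 = color(E^*_F)$, this immediately gives $C_0 = color(E^*_F)$; that is, every edge joining two distinct components of $F$ carries a color lying in $C_0$.

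The heart of the argument is to identify $\omega(G - E_{C_0}(G))$ with $\omega(F)$, and I would establish this through two opposite edge-set inclusions. First, by saturated condition (iii) we have $C_0 \cap color(F) = \emptyset$, so no edge of $F$ is deleted in passing to $G - E_{C_0}(G)$; hence $F$ is a spanning subgraph of $G - E_{C_0}(G)$, giving $\omega(G - E_{C_0}(G)) \le \omega(F)$. Second, since $color(E^*_F) = C_0$, every edge of $E^*_F$ lies in $E_{C_0}(G)$, so $E^*_F \subseteq E_{C_0}(G)$ and therefore $G - E_{C_0}(G)$ is a spanning subgraph of $G - E^*_F$, giving $\omega(G - E_{C_0}(G)) \ge \omega(G - E^*_F)$. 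Finally, because $E^*_F$ is by definition exactly the set of edges of $G$ running between distinct components of $F$, deleting it leaves each component of $F$ intact and mutually separated, so $\omega(G - E^*_F) = \omega(F)$. Chaining these three facts yields $\omega(G - E_{C_0}(G)) = \omega(F)$.

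With this identity in hand, the contradiction is immediate from saturated condition (iv):
\begin{equation*}
\omega(G - E_{C_0}(G)) = \omega(F) \ge m + 1 + \sum_{c \in C_0} f(c) > m + \sum_{c \in C_0} f(c),
\end{equation*}
which contradicts the standing hypothesis applied to $R = C_0$. Hence $C_1 \neq \emptyset$.

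I expect the only delicate point to be the middle step, namely securing the equality $\omega(G - E_{C_0}(G)) = \omega(F)$ rather than a one-sided bound. The subtlety is that $E_{C_0}(G)$ may contain, besides all of $E^*_F$, additional $C_0$-colored chords lying inside single components of $F$; such chords can only raise the component count of $G - E_{C_0}(G)$, and this is precisely what the inclusion $G - E_{C_0}(G) \subseteq G - E^*_F$ absorbs, while condition (iii) simultaneously supplies the reverse bound via $F$. Once one checks that these extra deletions are harmless — they push $\omega$ upward, reinforcing the contradiction rather than weakening it — the remainder is a direct substitution into condition (iv).
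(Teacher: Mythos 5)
Your proof is correct and follows essentially the same route as the paper: from $C_1=\emptyset$ and condition (ii) you get $E^*_F \subseteq E_{C_0}(G)$, hence $\omega(G-E_{C_0}(G)) \ge \omega(G-E^*_F)=\omega(F)$, and condition (iv) then contradicts the theorem's hypothesis with $R=C_0$. The only difference is that you also establish the reverse inequality via condition (iii) to get exact equality $\omega(G-E_{C_0}(G))=\omega(F)$, which is harmless but unnecessary --- the one-sided lower bound already suffices for the contradiction.
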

\begin{proof}
Suppose $C_1 = \emptyset$.
By the saturated condition (ii) of $<F, C_0, C_1>$,
$C_0 = C_0 \cup C_1 = color(E^*_F)$.
Then, $E^*_F \subseteq E_{C_0}(G)$.
Hence, by the saturated condition (iv) of $<F, C_0, C_1>$, we have
\begin{eqnarray*}
\omega(G-E_{C_0}(G)) \ge \omega(G-E^*_F) = \omega(F) \ge m+1+\sum\limits_{c \in C_0}f(c),
\end{eqnarray*}
which contradicts the assumption of the theorem.
\end{proof}

By the definition of a saturated triple,
$F$ is an $f$-chromatic spanning forest of $G$.
We define a triple $<F', C'_0, C'_1>$ based on $<F, C_0, C_1>$ as follows:
\begin{eqnarray*}
F' &:=& F-E_{C_1}(F),\\
C'_0 &:=& C_0 \cup C_1,\\
C'_1 &:=& color(E^*_{F'})\setminus C'_0.
\end{eqnarray*}

Fig.\ref{fig-100702-2} shows
an example of $E^*_F$ and $E^*_{F'}$
where the left illustration is the same as in Fig.\ref{fig-100702-1}.
By removing edges in $E_{C_1}(F)$ from $F$,
some components of $F$ splits into several new components.
$E^*_{F'}$ is the set of edges between these new components
and edges in $E^*_F$,
that is,
the set of edges between components of $F'$ in $G$.

\fig{\textwidth}{!}{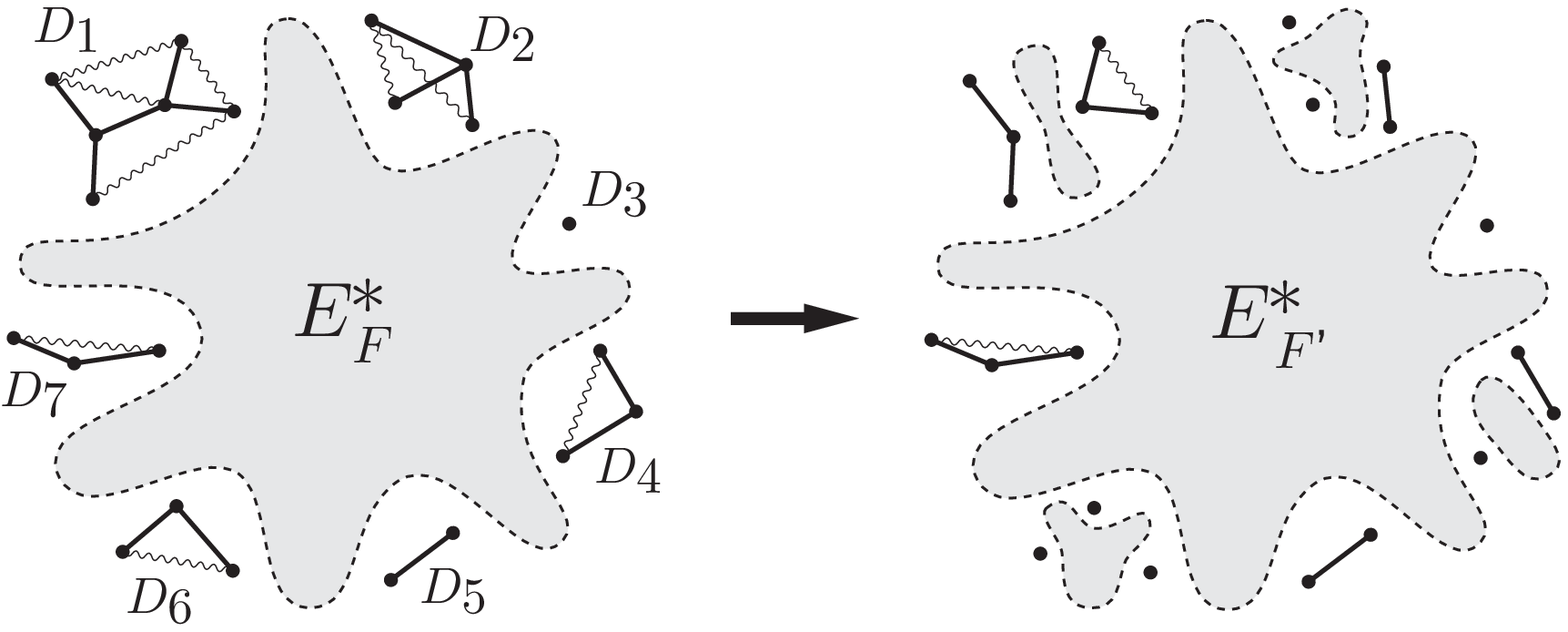}
{An example of $E^*_F$ and $E^*_{F'}$.}
{fig-100702-2}

Since $C_1 \ne \emptyset$ by Claim \ref{claim-110221-1},
we have $C_0 \subset C_0 \cup C_1 = C'_0$,
that is, $C'_0$ properly contains $C_0$.
Thus,
if we can prove that $<F', C'_0, C'_1>$ is saturated
then it contradicts the maximality of $C_0$
and Theorem \ref{thm-091015-1} is proved.
Note that $F'$ is an $f$-chromatic spanning forest of $G$
because $F$ is an $f$-chromatic spanning forest of $G$.

\begin{claim}
$<F', C'_0, C'_1>$ satisfies
the saturated conditions (i), (ii), and (iii).
\label{claim-100704-1}
\end{claim}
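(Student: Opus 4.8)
The plan is to verify the three conditions one at a time, directly from the definitions, with the only structural input being that deleting edges from the forest $F$ can only refine its partition into components. Concretely, since $F' = F - E_{C_1}(F)$ is obtained from $F$ by deleting edges, no two components of $F$ are merged, so any two vertices lying in distinct components of $F$ still lie in distinct components of $F'$; hence $E^*_F \subseteq E^*_{F'}$ and therefore $color(E^*_F) \subseteq color(E^*_{F'})$. I will also use that $F'$ is a subgraph of $F$ from which every $C_1$-colored edge has been removed, so that $color(F') \subseteq color(F)$ and $color(F') \cap C_1 = \emptyset$. These three observations carry the whole argument.

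Condition (i) I expect to be immediate: by definition $C'_1 = color(E^*_{F'}) \setminus C'_0$, which is disjoint from $C'_0$, so $C'_0 \cap C'_1 = \emptyset$.

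For condition (iii) I would show $C'_0 \cap color(F') = \emptyset$ by splitting $C'_0 = C_0 \cup C_1$ into two pieces. The $C_1$-piece contributes nothing, since $color(F') \cap C_1 = \emptyset$ by construction of $F'$. For the $C_0$-piece, $color(F') \subseteq color(F)$ together with condition (iii) for the original triple $<F, C_0, C_1>$ (that $C_0 \cap color(F) = \emptyset$) yields $C_0 \cap color(F') = \emptyset$. Combining the two gives $(C_0 \cup C_1) \cap color(F') = \emptyset$, which is exactly (iii) for $<F', C'_0, C'_1>$.

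For condition (ii) I would again use the definition $C'_1 = color(E^*_{F'}) \setminus C'_0$, so that $C'_0 \cup C'_1 = C'_0 \cup color(E^*_{F'})$; it then suffices to check the inclusion $C'_0 \subseteq color(E^*_{F'})$. But $C'_0 = C_0 \cup C_1 = color(E^*_F)$ by condition (ii) for $<F, C_0, C_1>$, and $color(E^*_F) \subseteq color(E^*_{F'})$ by the inclusion $E^*_F \subseteq E^*_{F'}$ noted at the outset; hence $C'_0 \subseteq color(E^*_{F'})$ and so $C'_0 \cup C'_1 = color(E^*_{F'})$. None of these steps is hard; the only point I would state explicitly is the inclusion $E^*_F \subseteq E^*_{F'}$, namely that deleting edges can only split components and never joins them. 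Everything else is set-algebra resting on conditions (ii) and (iii) of the original saturated triple, so I expect this claim to be routine bookkeeping, the genuine work being deferred to the later verifications of conditions (iv) and (v) for $<F', C'_0, C'_1>$.
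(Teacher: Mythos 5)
Your proof is correct and follows essentially the same route as the paper: condition (i) from the definition of $C'_1$, condition (ii) from $C'_0 = color(E^*_F) \subseteq color(E^*_{F'})$, and condition (iii) by splitting $C'_0 = C_0 \cup C_1$ and using $color(F') \subseteq color(F)$ together with the removal of all $C_1$-colored edges. The only difference is that you spell out why $E^*_F \subseteq E^*_{F'}$ (deletion only refines components), which the paper leaves implicit.
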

\begin{proof}
The triple $<F', C'_0, C'_1>$ satisfies
the saturated condition (i),
that is, $C'_0 \cap C'_1 = \emptyset$
by the definition of $C'_0$ and $C'_1$.

By the definition of $F'$,
we have $color(E^*_F) \subseteq color(E^*_{F'})$.
By the definition of $C'_0$
and the condition (ii) of the saturated triple $<F, C_0, C_1>$,
we have $C'_0 = C_0 \cup C_1 = color(E^*_F) \subseteq color(E^*_{F'})$.
Thus, by the definition of $C'_1$,
the triple $<F', C'_0, C'_1>$ satisfies
the saturated condition (ii).

By the condition (iii) of the saturated triple $<F, C_0, C_1>$,
we have $C_0 \cap color(F) = \emptyset$,
so $C_0 \cap color(F') = \emptyset$
because $color(F') \subseteq color(F)$.
By the definition of $F'$,
we have $C_1 \cap color(F') = \emptyset$.
Hence, by the definition of $C'_0$,
the triple $<F', C'_0, C'_1>$ satisfies
the saturated condition (iii).
\end{proof}

\begin{claim}
$<F', C'_0, C'_1>$ satisfies
the saturated condition (iv).
\label{claim-100704-2}
\end{claim}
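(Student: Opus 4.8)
The plan is to compute $\omega(F')$ exactly and then feed the result into the saturated condition (iv) already available for the triple $<F, C_0, C_1>$. Since $F$ is a forest, every edge of $F$ is a bridge, so deleting a single edge splits one component into two and increases the component count by exactly one. Removing the edges of $E_{C_1}(F)$ one at a time therefore gives, with no double counting,
\[
\omega(F') = \omega(F) + |E_{C_1}(F)|.
\]

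The next step is to evaluate $|E_{C_1}(F)|$ by invoking the saturated condition (v) of $<F, C_0, C_1>$. The key observation is that $F$ is itself a legitimate choice of $\tilde{F}$ in that condition: trivially $E^*_{F} = E^*_F$, and by condition (iii) of $<F, C_0, C_1>$ we have $C_0 \cap color(F) = \emptyset$. Hence condition (v) upgrades the generic bound $|E_c(F)| \le f(c)$ to the equality $|E_c(F)| = f(c)$ for every $c \in C_1$, so that
\[
|E_{C_1}(F)| = \sum_{c \in C_1} |E_c(F)| = \sum_{c \in C_1} f(c).
\]

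Finally I would assemble the pieces. Since $C_1 \cap C_0 = \emptyset$ by condition (i), the union $C'_0 = C_0 \cup C_1$ is disjoint, so $\sum_{c \in C'_0} f(c) = \sum_{c \in C_0} f(c) + \sum_{c \in C_1} f(c)$. Combining the two displays with condition (iv) for $<F, C_0, C_1>$ then yields
\[
\omega(F') = \omega(F) + \sum_{c \in C_1} f(c) \ge m+1+\sum_{c \in C_0} f(c) + \sum_{c \in C_1} f(c) = m+1+\sum_{c \in C'_0} f(c),
\]
which is exactly the saturated condition (iv) for $<F', C'_0, C'_1>$. I do not expect a genuine obstacle here: the only two points needing care are the fact that each edge deletion from a forest raises the number of components by precisely one (so the increase totals $|E_{C_1}(F)|$), and the verification that $F$ qualifies as a $\tilde{F}$ so that the sharp equality $|E_c(F)| = f(c)$ — not merely the inequality — is in force. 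Both are immediate from the definitions.
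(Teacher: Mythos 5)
Your proof is correct and follows essentially the same route as the paper: compute $\omega(F') = \omega(F) + \sum_{c \in C_1}|E_c(F)|$ using the fact that $F$ is a forest, apply condition (v) with $\tilde{F}=F$ to turn $|E_c(F)|$ into $f(c)$ for $c \in C_1$, and combine with condition (iv) of $<F,C_0,C_1>$ and the disjointness from (i). Your explicit check that $F$ qualifies as a $\tilde{F}$ in condition (v) is a point the paper handles only by the remark that $\tilde{F}=F$ is allowed, but the argument is identical.
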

\begin{proof}
By the definition of $F'$,
we have
\begin{eqnarray}
\omega(F')
&=
& \omega(F) + \sum\limits_{c \in C_1}|E_c(F)|.
\label{inequality-110222-1}
\end{eqnarray}
By the saturated conditions (iv) and (v) of $<F, C_0, C_1>$,
we have
\begin{eqnarray}
\omega(F) + \sum\limits_{c \in C_1}|E_c(F)|
& \ge
& m+1+\sum\limits_{c \in C_0}f(c) + \sum\limits_{c \in C_1}f(c).
\label{inequality-110222-2}
\end{eqnarray}
By the saturated condition (i) of $<F, C_0, C_1>$
and the definition of $C'_0$,
we have
\begin{eqnarray}
\sum\limits_{c \in C_0}f(c) + \sum\limits_{c \in C_1}f(c)
& =
& \sum\limits_{c \in C_0 \cup C_1}f(c)
= \sum\limits_{c \in C'_0}f(c).
\label{inequality-110222-3}
\end{eqnarray}
Thus, by the equations and inequalities
(\ref{inequality-110222-1}),
(\ref{inequality-110222-2}), and
(\ref{inequality-110222-3}),
we have
\begin{eqnarray*}
\omega(F')
& \ge
& m+1+\sum\limits_{c \in C'_0}f(c).
\end{eqnarray*}
Hence, the triple $<F', C'_0, C'_1>$ satisfies
the saturated condition (iv).
\end{proof}

In order to prove the last saturated condition (v),
we need some preparation.
Let $\tilde{F'}$ be any $f$-chromatic spanning forest of $G$
such that $E^*_{\tilde{F'}} = E^*_{F'}$
and $C'_0 \cap color(\tilde{F'}) = \emptyset$.
By the definition of $F'$,
$F = F' + E_{C_1}(F)$.
We want to consider the graph $H = \tilde{F'} + E_{C_1}(F)$ instead of $F$.

Fig.\ref{fig-100704-1} shows
how to construct the graph $H$ from $F$.
First, we get $F'$ by removing the edges in $E_{C_1}(F)$ from $F$.
Next, by changing edges only inside components of $G-E^*_{F'}$,
we pick up any $f$-chromatic spanning forest $\tilde{F'}$ of $G$
such that $E^*_{\tilde{F'}} = E^*_{F'}$
and $C'_0 \cap color(\tilde{F'}) = \emptyset$.
Last, we get $H$ by adding back the edges in $E_{C_1}(F)$ to $\tilde{F'}$,
which are indicated by double lines.

\fig{\textwidth}{!}{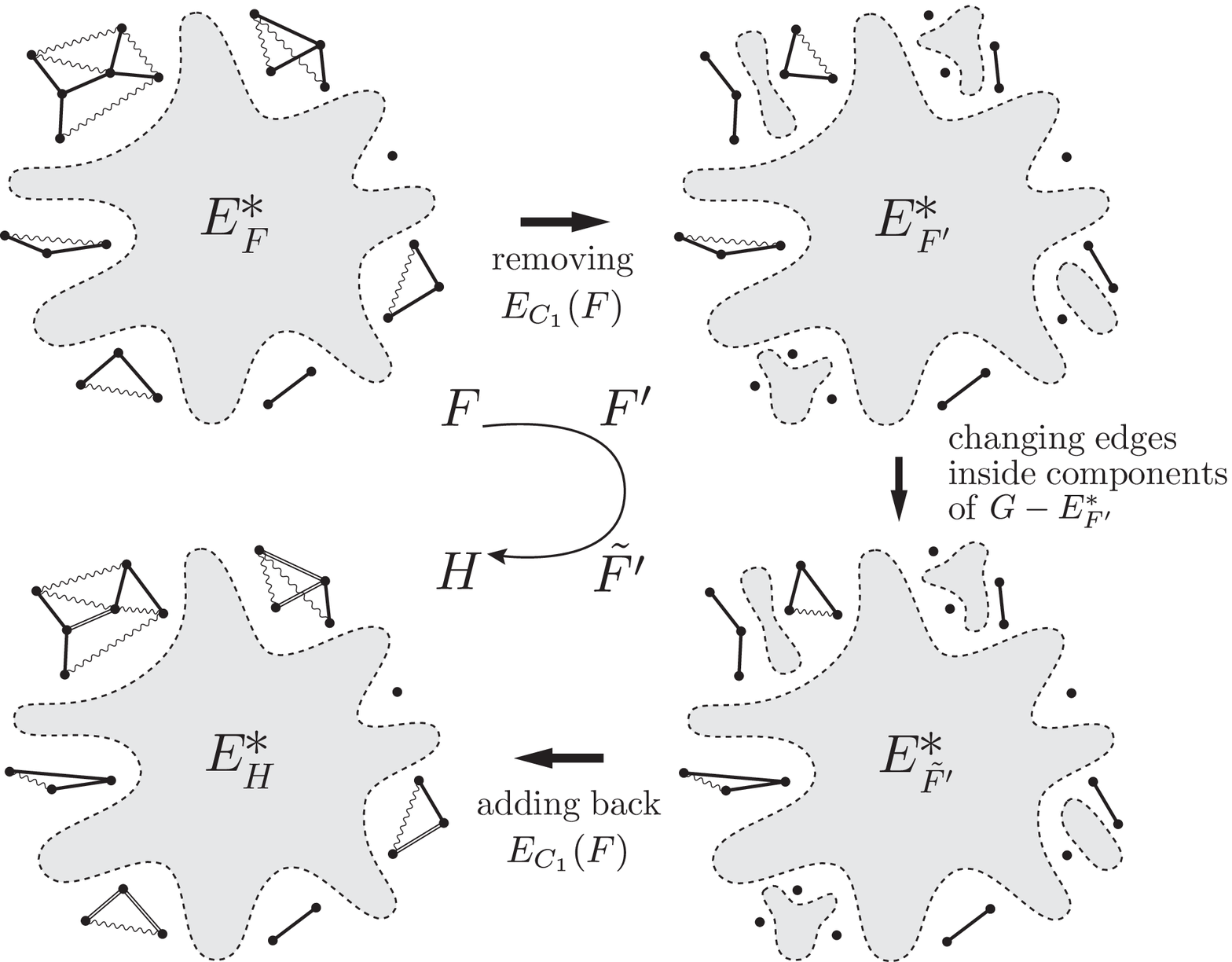}
{How to construct $H$ from $F$.}
{fig-100704-1}

\begin{claim}
$H$ is an $f$-chromatic spanning forest of $G$
such that $E^*_H = E^*_F$ and $C_0 \cap color(H) = \emptyset$.
\label{claim-100704-3}
\end{claim}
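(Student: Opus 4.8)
The plan is to verify the four assertions bundled in the claim—that $H$ is spanning, that it is a forest, that $E^*_H = E^*_F$, and that $C_0 \cap color(H) = \emptyset$, together with the $f$-chromatic property—by first pinning down the component structure of $\tilde{F'}$. The linchpin I would establish is this: for \emph{any} spanning forest $\Phi$ of $G$, the components of $\Phi$ coincide, as a partition of $V(G)$, with the components of $G - E^*_\Phi$. Indeed, $E(G) \setminus E^*_\Phi$ consists exactly of the $G$-edges lying inside components of $\Phi$, so each $\Phi$-component remains connected in $G - E^*_\Phi$ (it already carries a spanning tree of $\Phi$), while distinct $\Phi$-components are separated. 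Applying this to $\tilde{F'}$ and to $F'$, and using the defining hypothesis $E^*_{\tilde{F'}} = E^*_{F'}$, I conclude that $\tilde{F'}$ and $F'$ induce the very same partition $P$ of $V(G)$ into components. This is the fact the author's figure hints at when it speaks of ``changing edges only inside components of $G-E^*_{F'}$''.

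With $P$ fixed, the rest is bookkeeping transferred from $F$ to $H$. First I would note $E_{C_1}(F) \subseteq E^*_{F'}$: deleting the forest-edges $E_{C_1}(F)$ from $F$ to form $F'$ splits $F$-components, so each such edge now joins two distinct blocks of $P$; since $E^*_{\tilde{F'}} = E^*_{F'}$, these edges likewise join distinct components of $\tilde{F'}$. Because $\tilde{F'}$ and $F'$ share the partition $P$ and each restricts to a spanning tree on every block of $P$, contracting the blocks turns both $F = F' + E_{C_1}(F)$ and $H = \tilde{F'} + E_{C_1}(F)$ into the \emph{same} graph on the blocks, namely $E_{C_1}(F)$. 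As $F$ is a forest, this quotient is acyclic, and a graph that is a tree inside each block plus a quotient-acyclic set of cross-block edges is itself a forest; hence $H$ is a forest, and it is spanning since $\tilde{F'}$ is. Moreover $H$ and $F$ merge the blocks of $P$ along identical edges $E_{C_1}(F)$, so they induce the same component partition of $V(G)$, which yields $E^*_H = E^*_F$.

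Finally I would check the two color conditions. By the saturated condition (i) for $<F, C_0, C_1>$ we have $C_0 \cap C_1 = \emptyset$; combined with $C_0 \subseteq C'_0$ and $C'_0 \cap color(\tilde{F'}) = \emptyset$ this gives $C_0 \cap color(\tilde{F'}) = \emptyset$, and since every edge of $E_{C_1}(F)$ has color in $C_1$, we obtain $C_0 \cap color(H) = \emptyset$. For the $f$-chromatic property I split colors by membership in $C_1$: a color $c \in C_1$ does not appear on $\tilde{F'}$ (as $C_1 \subseteq C'_0$ avoids $color(\tilde{F'})$), so the $c$-colored edges of $H$ are exactly $E_c(F)$, giving $|E_c(H)| = |E_c(F)| \le f(c)$; a color $c \notin C_1$ appears on no edge of $E_{C_1}(F)$, so $E_c(H) = E_c(\tilde{F'})$ and $|E_c(H)| \le f(c)$ because $\tilde{F'}$ is $f$-chromatic. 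Thus $H$ is an $f$-chromatic spanning forest with $E^*_H = E^*_F$ and $C_0 \cap color(H) = \emptyset$.

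The step I expect to be the real crux is the very first one: showing that $E^*_{\tilde{F'}} = E^*_{F'}$ forces $\tilde{F'}$ to carry the same component partition $P$ as $F'$. Everything downstream—the acyclicity of $H$, the equality $E^*_H = E^*_F$, and the two color bounds—is routine once this structural coincidence is in hand, because it is precisely what lets me transplant the acyclic gluing pattern of $E_{C_1}(F)$ from $F'$ verbatim onto $\tilde{F'}$.
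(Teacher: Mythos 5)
Your proposal is correct and follows essentially the same route as the paper: both arguments hinge on the observation that $E^*_{\tilde{F'}} = E^*_{F'}$ forces $\tilde{F'}$ and $F'$ to induce the same component partition of $V(G)$, so that the edges of $E_{C_1}(F)$ glue the components of $\tilde{F'}$ exactly as they glue those of $F'$, and the color conditions then follow from $C'_0 \cap color(\tilde{F'}) = \emptyset$ and saturated condition (i). Your quotient-acyclicity argument and the case split (on whether $c \in C_1$) for the $f$-chromatic bound merely make explicit what the paper states more tersely.
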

\begin{proof}
$H$ is a spanning subgraph of $G$
because $\tilde{F'}$ is a spanning forest of $G$.
Let $T_1, T_2, \ldots, T_k$
and $\tilde{T}_1, \tilde{T}_2, \ldots, \tilde{T}_l$
be the components of $F'$ and $\tilde{F'}$, respectively, which are trees.
The graphs $G-E^*_{F'}$ and $G-E^*_{\tilde{F'}}$
consist of $k$ and $l$ components induced by $T_i$'s and $\tilde{T}_i$'s,
respectively.
Since $E^*_{\tilde{F'}} = E^*_{F'}$,
$G-E^*_{F'} = G-E^*_{\tilde{F'}}$.
Thus, $k=l$ and we may assume that $V(T_i)=V(\tilde{T}_i)$ for every $i$.
Hence,
an edge $e \in E_{C_1}(F)$ connects $T_i$ and $T_j$ in $G$,
if and only if $e$ connects $\tilde{T}_i$ and $\tilde{T}_j$ in $G$.
Therefore,
since $F$ is a forest,
$H$ also is a spanning forest of $G$ and $E^*_H = E^*_F$.

Since $C'_0 \cap color(\tilde{F'}) = \emptyset$ and $C'_0 = C_0 \cup C_1$,
$\tilde{F'}$ has no colors of $E_{C_1}(F)$.
Thus, $H = \tilde{F'} + E_{C_1}(F)$ is $f$-chromatic
because both $\tilde{F'}$ and $F$ are $f$-chromatic.
Since $C'_0 \cap color(\tilde{F'}) = \emptyset$ and $C'_0 = C_0 \cup C_1$,
$\tilde{F'}$ has no colors in $C_0$.
By the saturated condition (i) of $<F, C_0, C_1>$,
$E_{C_1}(F)$ has no colors in $C_0$.
Hence,
$C_0 \cap color(H) = C_0 \cap color(\tilde{F'} + E_{C_1}(F)) = \emptyset$.
\end{proof}

\begin{claim}
$<F', C'_0, C'_1>$ satisfies
the saturated condition (v).
\label{claim-100704-4}
\end{claim}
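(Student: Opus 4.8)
The plan is to prove condition (v) by contradiction, exploiting the graph $H$ from Claim \ref{claim-100704-3} together with the fact that condition (v) of the original triple $<F,C_0,C_1>$ applies to \emph{every} admissible forest, not merely to $F$ itself. First I would fix an arbitrary $f$-chromatic spanning forest $\tilde{F'}$ of $G$ with $E^*_{\tilde{F'}}=E^*_{F'}$ and $C'_0 \cap color(\tilde{F'})=\emptyset$, and form $H = \tilde{F'}+E_{C_1}(F)$. By Claim \ref{claim-100704-3}, $H$ is an $f$-chromatic spanning forest with $E^*_H=E^*_F$ and $C_0 \cap color(H)=\emptyset$, so $H$ is an admissible witness for condition (v) of $<F,C_0,C_1>$; hence $|E_c(H)|=f(c)$ for every $c \in C_1$. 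Moreover, since $C'_1 \cap C_1 = \emptyset$, no edge of $E_{C_1}(F)$ carries a color in $C'_1$, so $|E_c(\tilde{F'})| = |E_c(H)|$ for every $c \in C'_1$.

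Next I fix $c \in C'_1$. As $\tilde{F'}$ is $f$-chromatic, $|E_c(\tilde{F'})| \le f(c)$; I suppose for contradiction that $|E_c(\tilde{F'})| < f(c)$. Since $c \in C'_1 \subseteq color(E^*_{F'})$, there is an edge $e \in E^*_{F'}=E^*_{\tilde{F'}}$ with $color(e)=c$. Because $color(E^*_F)=C_0 \cup C_1 = C'_0$ while $c \notin C'_0$, this $e$ lies in $E^*_{F'} \setminus E^*_F$; thus its endpoints lie in two different components of $\tilde{F'}$ but, since $E^*_H=E^*_F$, in the \emph{same} component of $H$. Consequently the unique cycle created by adding $e$ to the forest $H$ must cross between components of $\tilde{F'}$, and the only edges of $H$ doing so are those of $E_{C_1}(F)$; hence this cycle contains some edge $e' \in E_{C_1}(F)$, with $c':=color(e') \in C_1$.

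I then set $\hat{F} := H + e - e'$. Deleting the edge $e'$ of the unique cycle created by $e$ yields an $f$-chromatic spanning forest with $\omega(\hat{F})=\omega(H)$ and the same component partition, so $E^*_{\hat{F}}=E^*_H=E^*_F$; and since $color(\hat{F}) \subseteq color(H) \cup \{c\}$ with $c \notin C_0$, we still have $C_0 \cap color(\hat{F})=\emptyset$. Thus $\hat{F}$ is again an admissible witness for condition (v) of $<F,C_0,C_1>$, forcing $|E_{c'}(\hat{F})|=f(c')$. But removing the $c'$-colored edge $e'$ and adding the $c$-colored edge $e$ gives $|E_{c'}(\hat{F})|=|E_{c'}(H)|-1=f(c')-1$, a contradiction. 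Therefore $|E_c(\tilde{F'})|=f(c)$ for every $c \in C'_1$, which is precisely condition (v) for $<F',C'_0,C'_1>$.

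The step I expect to be the main obstacle is recognizing that the single-edge swap $\hat{F}=H+e-e'$ is itself an admissible witness for the \emph{original} saturated condition (v): this is what permits a second application of (v) and converts the assumed deficiency at color $c$ into an impossible deficiency at color $c'$. The two supporting facts it rests on—that the fundamental cycle of $e$ in $H$ must contain a $C_1$-colored edge, and that the swap preserves both $E^*$ and the avoidance $C_0 \cap color(\cdot)=\emptyset$—will need careful verification, but once they are established the contradiction is immediate.
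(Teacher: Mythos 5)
Your proposal is correct and follows essentially the same route as the paper's proof: both construct $H=\tilde{F'}+E_{C_1}(F)$, locate a $c$-colored edge $e\in E^*_{F'}\setminus E^*_F$ whose fundamental cycle in $H$ must contain an edge $e'\in E_{C_1}(F)$, and then use the swapped forest $H+e-e'$ as an admissible witness violating condition (v) of the original triple $<F,C_0,C_1>$. The only cosmetic difference is that you invoke $|E_{c'}(H)|=f(c')$ explicitly before the swap, whereas the paper derives the contradiction from $|E_{c'}(H+e-e')|<|E_{c'}(H)|\le f(c')$; the substance is identical.
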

\begin{proof}
Suppose that
the triple $<F', C'_0, C'_1>$ does not satisfy
the saturated condition (v),
namely,
there exists some color $c \in C'_1$ such that $|E_c(\tilde{F'})| \ne f(c)$
for some $f$-chromatic spanning forest $\tilde{F'}$ of $G$
such that $E^*_{\tilde{F'}} = E^*_{F'}$
and $C'_0 \cap color(\tilde{F'}) = \emptyset$.
Then,
$|E_c(\tilde{F'})| < f(c)$
because $\tilde{F'}$ is $f$-chromatic.
By the definition of $C'_1$,
$E^*_{F'}$ has some edge $e$ with the color $c$.

First, we show that 
$e \notin E(H)$ and $e \notin E^*_H$.
Since $E^*_{\tilde{F'}} = E^*_{F'}$,
$e \in E^*_{\tilde{F'}}$,
which implies $e \notin E(\tilde{F'})$.
By the definition of $C'_1$ and $C'_0$,
and the saturated condition (ii) of $<F, C_0, C_1>$,
$c \notin C'_0 = C_0 \cup C_1 = color(E^*_F)$,
so $e \notin E_{C_1}(F)$ and $e \notin E^*_F$.
Then, we have shown the following subclaims.

\begin{subclaim}
$c \notin C'_0 = C_0 \cup C_1 = color(E^*_F)$.
\label{subclaim-100705-1}
\end{subclaim}

\begin{subclaim}
$e \in E^*_{\tilde{F'}}$,
$e \notin E(\tilde{F'})$,
$e \notin E_{C_1}(F)$,
and $e \notin E^*_F$.
\label{subclaim-100705-2}
\end{subclaim}

Thus, $e \notin E(\tilde{F'}) \cup E_{C_1}(F) = E(H)$,
and
$e \notin E^*_H$
because $E^*_F = E^*_H$
by Claim \ref{claim-100704-3}.

\begin{subclaim}
$e \notin E(H)$ and $e \notin E^*_H$.
\label{subclaim-100705-3}
\end{subclaim}

Hence, by Subclaim \ref{subclaim-100705-3},
the edge $e$ connects two vertices $x$ and $y$
in the same tree component $T$ of $H$,
and $H+e$ has a cycle $D$.
Since $e \in E^*_{\tilde{F'}}$
by Subclaim \ref{subclaim-100705-2},
$e$ connects two tree components $\tilde{T}_i$ and $\tilde{T}_j$
of $\tilde{F'}$ for some $i$ and $j$.
Note that both $\tilde{T}_i$ and $\tilde{T}_j$ are subgraphs of $T$
because $e \notin E^*_H$ by Subclaim \ref{subclaim-100705-3}.
Thus, there exists a path connecting $x$ and $y$ without $e$ in $H$,
and the cycle $D$ consists of such a path and $e$.
Hence, the cycle $D$ contains some edge
$e' \in E(H) \setminus E(\tilde{F'}) = E_{C_1}(F)$
by the definition of $H$.
Note that $T+e-e'$ is a tree with $V(T)=V(T+e-e')$.
Let $\tilde{F} = H+e-e'$ and $c' = color(e')$.
Then, $c \ne c'$ because $c' \in C_1$ and $c \notin C_1$
by Subclaim \ref{subclaim-100705-1}.
By Claim \ref{claim-100704-3},
$H$ is an $f$-chromatic spanning forest of $G$.
Since $H=\tilde{F'}+E_{C_1}(F)$ and $c \notin C_1$
by Subclaim \ref{subclaim-100705-1},
$|E_c(H)|=|E_c(\tilde{F'})| < f(c)$
by the assumption $|E_c(\tilde{F'})| < f(c)$.
Thus, $\tilde{F}= H+e-e'$ is also an $f$-chromatic spanning forest of $G$.
Since $V(T)=V(T+e-e')$, $E^*_{\tilde{F}} = E^*_{H}$,
that is,
$E^*_{\tilde{F}} = E^*_{F}$ by Claim \ref{claim-100704-3}.
Moreover,
$c \notin C_0$ by Subclaim \ref{subclaim-100705-1}.
Thus,
By Claim \ref{claim-100704-3},
$C_0 \cap color(\tilde{F}) = C_0 \cap color(H+e-e') = \emptyset$.
Hence,
$\tilde{F}$ is an $f$-chromatic spanning forest of $G$
such that $E^*_{\tilde{F}} = E^*_F$
and $C_0 \cap color(\tilde{F}) = \emptyset$.
On the other hand, since $c \ne c' \in C_1$,
$|E_{c'}(\tilde{F})| = |E_{c'}(H)|-1 < |E_{c'}(H)| \le f(c')$,
which contradicts the saturated condition (v) of $<F, C_0, C_1>$.
\end{proof}

By Claim \ref{claim-100704-1}, \ref{claim-100704-2}, and \ref{claim-100704-4},
$<F', C'_0, C'_1>$ is saturated.
As discussed above,
it contradicts the maximality of $C_0$.
Consequently, Theorem \ref{thm-091015-1} is proved.

\section*{Acknowledgments}
I appreciate Hikoe Enomoto
for valuable suggestions for the proof of Theorem \ref{thm-091015-1}.


\end{document}